\theoremstyle{plain} \numberwithin{equation}{section}
\newtheorem{main}{Theorem}
\newtheorem{thm}{Theorem}[section]
\newtheorem{prop}[thm]{Proposition}
\newtheorem{conj}[thm]{Conjecture}
\newtheorem{lemma}[thm]{Lemma}
\theoremstyle{definition}
\newtheorem{remark}{Remark}[section]
\newtheorem{defn}[remark]{Definition}
\newtheorem{rmk}[thm]{Remark}
\newcommand{\bi}{\begin{itemize}}
\newcommand{\ei}{\end{itemize}}
\newcommand{\bp}{\begin{proof}}
\newcommand{\ep}{\end{proof}}
\def\dim{\mbox{dim}}
\def\ra{\rightarrow}
\def\cal{\mathcal}
\def\CC{\mathbb{C}}
\def\PP{\mathbb{P}}
\def\RR{\mathbb{R}}
\def\EE{\cal E}
\def\BB{\cal B}
\def\GG{\cal G}
\def\s-{\setminus}
\begin{document}

\title{Balanced manifolds and $SKT$ metrics}

\author[Chiose]{Ionu\c{t} Chiose}

\address{
	Institute of Mathematics of the Romanian Academy,  P.O. Box 1-764, Bucharest, 014700,  Romania}
	
	\email{Ionut.Chiose@imar.ro}

\author[R\u asdeaconu]{Rare\c s R\u asdeaconu}

\address{        
        Department of Mathematics, 1326 Stevenson Center, Vanderbilt University, Nashville, TN, 37240, USA}
        
        \email{rares.rasdeaconu@vanderbilt.edu}

\author[\c Suvaina]{Ioana \c Suvaina}

\address{        
        Department of Mathematics, 1326 Stevenson Center, Vanderbilt University, Nashville, TN, 37240, USA}

\email{ioana.suvaina@vanderbilt.edu}


\keywords{Special hermitian metrics; balanced cone; twistor spaces; Moishezon manifolds}

\subjclass[2010]{Primary: 53C55; Secondary: 32J18, 32L25, 32U40.}

\begin{abstract} 
The equality between the balanced and the Gauduchon cones is discussed 
in several situations. In particular, it is shown that equality does not hold on 
many twistor spaces, and it holds on Moishezon manifolds. Moreover, it is 
proved that a  $SKT$ manifold of dimension three  on 
which the balanced cone equals the Gauduchon cone is in fact K\"ahler.
\end{abstract}

\maketitle
\tableofcontents

\section{Introduction}

Let $X$ be a closed complex  manifold of dimension $n$. Recall that a class in the 
Bott-Chern cohomology group $H^{1,1}_{BC}(X,\RR)$ is called pseudoeffective if it 
contains a closed positive current. The set of such classes form a closed convex cone 
in $ H^{1,1}_{BC}(X,\RR)$ called the pseudoeffective cone and it is denoted it by 
$\EE_{BC}^1.$ In  \cite {bdpp},   Boucksom, Demailly, P\u aun and Peternell introduced
the movable cone  $\cal M\subset H^{n-1,n-1}_{BC}(X)$. This cone is defined 
as the closure of the convex cone generated by classes of currents of the form
$p_*(\widetilde \omega_1\wedge \dots \wedge \widetilde\omega_{n-1}),$
where $p: \widetilde X\ra X$ is some modification and $\omega_i$ are K\"ahler forms on 
$\widetilde X$. 

The following remarkable conjecture  has been recently 
confirmed for projective manifolds by Witt Nystr\"om \cite{wittnystrom}, while the general 
case  is still open:
\begin{conj}[Conjecture 2.3, \cite{bdpp}]
\label{bdpp-conj}
For any K\"ahler manifold, 
$$
({\mathcal E}_{BC}^{1})^*=\overline{\cal M}.
$$  
\end{conj}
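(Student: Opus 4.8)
\emph{Proof strategy.} The assertion is that two closed convex cones are polar to each other under the natural pairing $H^{1,1}_{BC}(X,\RR)\times H^{n-1,n-1}_{BC}(X,\RR)\to\RR$, which on a K\"ahler manifold is perfect (Bott--Chern, Dolbeault and Aeppli cohomology coincide, and the cup product $H^{1,1}\times H^{n-1,n-1}\to H^{n,n}\cong\RR$ is non-degenerate). I would first dispose of the inclusion $\overline{\mm}\subseteq(\EE^1_{BC})^*$, which is elementary: for a modification $p:\wt X\to X$, K\"ahler forms $\wt\omega_i$ on $\wt X$, and a class $\alpha=[T]$ with $T$ a closed positive current, the projection formula gives $\langle\alpha,\,p_*[\wt\omega_1\wedge\dots\wedge\wt\omega_{n-1}]\rangle=\int_{\wt X}p^*T\wedge\wt\omega_1\wedge\dots\wedge\wt\omega_{n-1}\ge 0$, because $p^*T$ is again a closed positive $(1,1)$-current and $\wt\omega_1\wedge\dots\wedge\wt\omega_{n-1}$ is a positive $(n-1,n-1)$-form. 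By the bipolar theorem the conjecture is then equivalent to the reverse inclusion $(\overline{\mm})^*\subseteq\EE^1_{BC}$, i.e.\ to the single implication: \emph{a class $\alpha\in H^{1,1}_{BC}(X,\RR)$ that pairs non-negatively with every movable class is pseudoeffective.}

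To prove this I would argue by contraposition and produce a witness: assuming $\alpha$ is not pseudoeffective, exhibit a movable class $\beta$ with $\langle\alpha,\beta\rangle<0$. In the projective case this is precisely Witt Nystr\"om's theorem, completing the program of Boucksom--Demailly--P\u aun--Peternell, and I would follow that route. After a regularization one reduces to the case where $\alpha$ lies just outside $\EE^1_{BC}$, so that $\alpha+\varepsilon\omega$ is big for a K\"ahler class $\omega$ and small $\varepsilon>0$; passing to a suitable birational model, one studies the boundary of the pseudoeffective cone by means of the divisorial Zariski decomposition and of the differentiability of the volume, $\mathrm{vol}(\xi)=\int\langle\xi^n\rangle$ with $\frac{d}{dt}\mathrm{vol}(\xi+t\eta)|_{t=0}=n\,\langle\xi^{n-1}\rangle\cdot\eta$. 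Since the classes $\langle\xi^{n-1}\rangle$ of big $\xi$ span $\overline{\mm}$, a first-order computation turns the failure of pseudoeffectivity of $\alpha$ into a strictly negative pairing against one such $(n-1)$-st positive product. The technical heart — the reason this is a theorem and not a formality — is controlling positive products and volumes precisely enough to establish the orthogonality relations at the boundary of the pseudoeffective cone; in the projective setting these are supplied by Okounkov bodies and Fujita-type approximation.

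The main obstacle is the passage from projective to general K\"ahler $X$. Okounkov bodies, approximation of volumes by intersection numbers on blow-ups, and Fujita approximation are not available transcendentally, and the cleanest description of what is missing is \emph{Demailly's transcendental holomorphic Morse inequalities}, in the form: $\int_X\alpha^n>0$ should force $\alpha$ to contain a K\"ahler current. Boucksom--Demailly--P\u aun--Peternell already show that the K\"ahler case of the duality follows from these inequalities, which at present are known only in special situations; accordingly I would isolate the transcendental difficulty into exactly that step and expect the whole argument to stand or fall with it. Secondary points to verify, routine in the K\"ahler/Fujiki framework but worth recording, are the functoriality of positive currents and of positive products under modifications (a modification of a K\"ahler manifold is again K\"ahler, which helps considerably), Demailly's regularization of closed positive currents with controlled loss of positivity, and the perfectness of the cohomological pairing that underlies the bipolar argument.
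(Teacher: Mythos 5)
This statement is labelled a \emph{conjecture} in the paper, and the paper offers no proof of it: it records that the projective case was settled by Witt Nystr\"om and that ``the general case is still open.'' Your proposal is therefore not comparable to a proof in the paper, and, as you yourself concede in the final paragraph, it is not a proof at all. The parts you do carry out are correct: the inclusion $\overline{\mm}\subseteq(\EE^1_{BC})^*$ via the projection formula is sound (pulling back a closed positive $(1,1)$-current through a modification is legitimate because such currents have local potentials), the reduction by the bipolar theorem to ``weakly movable-positive implies pseudoeffective'' is the standard reformulation, and the projective case is indeed Witt Nystr\"om's theorem.

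The genuine gap is the one you name: for a general (non-projective) K\"ahler $X$, the step that converts failure of pseudoeffectivity of $\alpha$ into a strictly negative pairing against a positive product $\langle\xi^{n-1}\rangle$ rests on the transcendental holomorphic Morse inequality ``$\int_X\alpha^n>0$ forces a K\"ahler current in $\alpha$,'' together with the orthogonality estimates for the divisorial Zariski decomposition at the boundary of $\EE^1_{BC}$. Boucksom--Demailly--P\u aun--Peternell already showed the conjecture \emph{follows} from these inputs; restating that reduction does not discharge it, and no proof of the transcendental inequality in the required generality exists. So your text is an accurate survey of why the statement is believed and of exactly where it is open, but the statement remains unproved, consistent with its status in the paper (which proves only the weaker, related facts $\BB=\GG$ for Moishezon manifolds and the failure of the analogous equality on certain twistor spaces).
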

Extending the work of Toma \cite{toma} from projective to K\"ahler setting, it was observed 
by Fu and Xiao \cite[Theorem A.2]{fuxiao} (see also \cite[Remark 2.8]{crs}) that  Conjecture 
\ref{bdpp-conj} implies that for K\"ahler manifolds the movable cone is in fact the balanced cone 
$\BB$ of all positive $d$-closed smooth $(n-1,n-1)$-forms in $H^{n-1,n-1}_{BC}(X,\RR).$

\smallskip

A Hermitian metric $g$ on $X$ with co-closed K\"ahler form $\omega$ is called 
balanced. The class of balanced manifolds, i.e., the class of closed complex manifolds 
carrying balanced metrics, was introduced by Michelsohn \cite{michelsohn} who 
observed that prescribing a balanced metric (or equivalently its K\"ahler form) is the 
same as prescribing a positive $d$-closed smooth $(n-1,n-1)$-form. This class of 
manifolds has attracted considerable interest in the recent years. Most notably, 
Alessandrini and Bassanelli proved in \cite{alessandrini2} that unlike the class of 
K\"ahler manifolds, the class of balanced manifolds is closed under  bimeromorphisms. 
Furthermore, Fu, Li and Yau \cite{fly1} stressed  the importance of balanced manifolds 
from the perspective of heterotic string theory and constructed interesting non-K\"ahler 
examples in dimension three. Motivated by Conjecture \ref{bdpp-conj}, Fu and Xiao 
formulated the following:

\begin{conj}[Conjecture A.4., \cite{fuxiao}] 
\label{fx-conj}
For any compact balanced manifold 
$$
({\mathcal E}_{BC}^{1})^*=\overline{\cal B}.
$$  
\end{conj}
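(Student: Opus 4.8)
The plan is to dualize the statement. A Hahn--Banach separation argument (together with a regularization of positive currents to smooth forms) should identify the dual cone $({\mathcal E}_{BC}^{1})^{*}$ with the closure $\overline{\mathcal G}$ of the \emph{Gauduchon cone}, i.e.\ the set of classes carrying a smooth positive $(n-1,n-1)$-form $\Omega$ with $\partial\bar\partial\Omega=0$: a positive $(1,1)$-current wedged with a positive $(n-1,n-1)$-form is a nonnegative measure, giving $\overline{\mathcal G}\subseteq({\mathcal E}_{BC}^{1})^{*}$, while a class outside $\overline{\mathcal G}$ can be separated from the positive cone by a closed real $(1,1)$-current, which is then pseudoeffective and certifies that the class is not in $({\mathcal E}_{BC}^{1})^{*}$. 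Since a $d$-closed $(n-1,n-1)$-form is in particular $\partial\bar\partial$-closed, one has $\mathcal B\subseteq\mathcal G$ and hence $\overline{\mathcal B}\subseteq\overline{\mathcal G}=({\mathcal E}_{BC}^{1})^{*}$. Consequently Conjecture \ref{fx-conj} is equivalent to the single reverse inclusion $\overline{\mathcal G}\subseteq\overline{\mathcal B}$, i.e.\ \emph{every Gauduchon class is a limit of balanced classes}.

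I would establish this inclusion in three widening cases. For $X$ projective it is immediate from Witt Nystr\"om's theorem \cite{wittnystrom} combined with the Fu--Xiao observation recalled above, which together give $({\mathcal E}_{BC}^{1})^{*}=\mathcal M=\overline{\mathcal B}$; for $X$ K\"ahler it reduces to the general form of Conjecture \ref{bdpp-conj}. The genuinely new case is $X$ \emph{Moishezon}: choose a modification $p\colon\widetilde X\to X$ with $\widetilde X$ projective and transport the cones along $p$ --- pulling back closed positive $(1,1)$-currents to relate the pseudoeffective cones, and, crucially, pushing forward $d$-closed positive $(n-1,n-1)$-forms (using that balancedness is a bimeromorphic invariant, Alessandrini--Bassanelli \cite{alessandrini2}, and regularizing the pushforwards) to relate the balanced cones --- so that the equality already known on $\widetilde X$ descends to $\overline{\mathcal B}(X)=\overline{\mathcal G}(X)$. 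In parallel one should test the reverse inclusion on balanced manifolds that are neither K\"ahler nor Moishezon --- twistor spaces being the natural candidates, where $\mathcal G$ is large while $\mathcal B$ is rigid --- and there I expect the two cones to differ.

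The main obstacle is exactly the inclusion $\overline{\mathcal G}\subseteq\overline{\mathcal B}$ once the K\"ahler and bimeromorphic inputs are unavailable: it demands that a positive $\partial\bar\partial$-closed $(n-1,n-1)$-form be deformable, within its cohomology class, to a $d$-closed positive form, and the ``room'' for such a deformation is controlled by the failure of the $\partial\bar\partial$-lemma, for which there is no a priori lower bound. My expectation is that this step cannot be completed in general --- that Conjecture \ref{fx-conj} is in fact false, with twistor spaces furnishing the counterexamples --- and that the honest statement one proves is the conditional one sketched above: $\overline{\mathcal B}=\overline{\mathcal G}$ holds on Moishezon manifolds (and on any balanced manifold bimeromorphic to one where it is known), complemented by the negative result on twistor spaces. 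Under further metric hypotheses one should be able to go further still; in particular, in dimension three an $SKT$ manifold satisfying $\overline{\mathcal B}=\overline{\mathcal G}$ ought to be forced to be K\"ahler.
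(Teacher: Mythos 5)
Your overall verdict is the right one: the statement is a conjecture that this paper refutes, and your sketch matches the paper's program rather closely --- Lamari's criterion identifies $({\mathcal E}_{BC}^{1})^{*}$ with $\overline{\GG}$, so the conjecture becomes the equality of balanced and Gauduchon cones on balanced manifolds; Witt Nystr\"om gives the projective case and a descent along a modification gives the Moishezon case (the paper's Proposition \ref{up-down} does exactly this, though at the level of currents: pull back a positive $\partial\bar\partial$-closed $(1,1)$-current by its total transform, use ${\mathcal E}^1_{A}={\mathcal E}^1_{BC}$ upstairs, and push a $d$-closed positive representative back down); and twistor spaces are indeed where the conjecture fails (Theorem \ref{main-bad-twistors}), with the SKT threefold statement you anticipate appearing as Theorem \ref{B+G=K}.

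The genuine gap is that your negative prediction is only an expectation: you give no argument that $\GG$ and $\iota_{n-1}(\BB)$ actually differ on any twistor space, and ``$\GG$ is large while $\BB$ is rigid'' is not a proof. The paper's mechanism is a concrete cohomological obstruction (Lemma \ref{iso-coh} and the proof of Theorem \ref{main-bad-twistors}): if $\iota_{n-1}(\BB_{\cal Z})=\GG_{\cal Z}$ then $\iota_{n-1}$ is surjective, which forces the map $\bar\partial: H^{2,2}_A\to H^{2,3}_{BC}$ to vanish, hence by duality $H^{1,0}_A({\cal Z})=0$, hence $H^{0,1}_{\bar\partial}({\cal Z})=0$; but Eastwood--Singer give $\dim H^{0,1}_{\bar\partial}({\cal Z})=\dim H^1_{dR}(M)$, so any ASD four-manifold with $b_1(M)\neq 0$ (which exist by Taubes) yields a counterexample. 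Without some such obstruction your program cannot close. A smaller but related imprecision: you write the reduction as $\overline{\BB}\subseteq\overline{\GG}=({\mathcal E}_{BC}^{1})^{*}$ as if these cones lived in the same space, but $\BB$ sits in $H^{n-1,n-1}_{BC}$ while $({\mathcal E}_{BC}^{1})^{*}$ sits in $H^{n-1,n-1}_{A}$; the correct formulation is $\iota_{n-1}(\BB)=\GG$, and on manifolds where the $\partial\bar\partial$-lemma fails (precisely the twistor setting) this distinction is not cosmetic --- it is the failure of surjectivity of $\iota_{n-1}$ that the counterexample exploits.
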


We give first many counter-examples to Conjecture \ref{fx-conj}. 
To formulate our result, recall that on a closed complex manifold $X$ 
one can define define the Gauduchon cone ${\mathcal G}$ as the 
set of all classes in the Aeppli cohomology group $H^{n-1,n-1}_A(X,\RR)$ 
which can be represented by a Gauduchon metric, i.e, by a  
$\partial\bar\partial$-closed positive $(n-1,n-1)$-form. Lamari's positivity 
criterion \cite[Lemme 3.3]{lamari} can be stated as 
$({\mathcal E}_{BC}^{1})^*=\overline\GG.$ 
Furthermore, let 
$$
\iota_{n-1}: H^{n-1,n-1}_{BC}(X,{\mathbb R})\to H^{n-1,n-1}_A(X,{\mathbb R})
$$ 
be the map induced by the identity.  
Since a balanced metric is also a Gauduchon metric, 
we have $\iota_{n-1}({\mathcal B})\subseteq {\mathcal G}.$  Therefore, the claim in 
Conjecture \ref{fx-conj} is  $\iota_{n-1}(\BB)= \GG,$ provided the ambient manifold 
is balanced.

\begin{main}
\label{main-bad-twistors}
There exists twistor spaces  such that $\iota_{n-1}(\BB)\subsetneqq \GG.$
\end{main}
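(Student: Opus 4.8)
The plan is to use Lamari's identity $(\EE_{BC}^{1})^{*}=\overline{\GG}$ and Gauduchon's existence theorem: together they say that $\GG$ is a nonempty \emph{open} convex cone in $H^{n-1,n-1}_{A}(Z,\RR)$ (here $n=3$), hence one that linearly spans this space. Since $\iota_{n-1}(\BB)\subseteq\GG$ is already known, it is enough to produce twistor spaces $Z$ for which $\iota_{n-1}(\BB)$ is confined to a \emph{proper} linear subspace of $H^{n-1,n-1}_{A}(Z,\RR)$: having empty interior, such a set cannot fill the open cone $\GG$. As $\iota_{n-1}(\BB)$ lies in the image of the linear map $\iota_{n-1}\colon H^{n-1,n-1}_{BC}(Z,\RR)\to H^{n-1,n-1}_{A}(Z,\RR)$, it suffices to find twistor spaces on which $\iota_{n-1}$ fails to be surjective. (Trivially the same conclusion holds if $Z$ admits no balanced metric, since then $\iota_{n-1}(\BB)=\emptyset$; but to remain within the hypotheses of Conjecture \ref{fx-conj} one wants $\BB\neq\emptyset$.)

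Next, the Serre-type duality between Bott--Chern and Aeppli cohomology identifies $\iota_{n-1}$ with the transpose of $\iota_{1}\colon H^{1,1}_{BC}(Z,\RR)\to H^{1,1}_{A}(Z,\RR)$; hence $\iota_{n-1}$ is surjective if and only if $\iota_{1}$ is injective. So it is enough to exhibit, on suitable twistor spaces, a nonzero Bott--Chern class in $H^{1,1}_{BC}(Z,\RR)$ that dies in Aeppli cohomology — equivalently, a real $(1,1)$-form that is $\partial$- and $\bar\partial$-closed and not $\partial\bar\partial$-exact but is of the form $\partial u+\bar\partial\bar u$ for some $(0,1)$-form $u$. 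Such a class cannot exist when the $\partial\bar\partial$-lemma holds, in particular on the Moishezon twistor spaces (those of $S^{4}$ and of $n\bcp^{2}$), consistently with the fact, proved elsewhere in this paper, that equality holds on Moishezon manifolds; but it is the sort of degeneracy one expects on a non-Moishezon twistor space.

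To produce it I would work with twistor spaces whose complex geometry and cohomology are accessible — for example those of conformally flat $4$-manifolds such as $S^{1}\times S^{3}$ and their connected sums, those of flat or hyperk\"ahler tori and $K3$ surfaces (for which $Z$ carries a holomorphic map to $\PP^{1}$), or those of LeBrun's and Joyce's explicit anti-self-dual metrics — and exploit their structure: the $\PP^{1}$-fibration $\pi\colon Z\to M$, the antiholomorphic real structure $\sigma$ (which respects the bidegree splitting), the decomposition of $\pi^{*}H^{2}(M)$ into self-dual and anti-self-dual parts (only the latter contributing to type $(1,1)$ on $Z$), and the standard computations of $H^{q}(Z,\Omega^{p}_{Z})$ for twistor spaces (Atiyah--Hitchin--Singer, Pedersen--Poon, Eastwood, and others). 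The outcome sought is that $\iota_{1}$ is not injective, hence $\iota_{n-1}$ is not surjective, hence $\iota_{n-1}(\BB)\subseteq\im\,\iota_{n-1}\subsetneqq H^{n-1,n-1}_{A}(Z,\RR)$, and therefore $\iota_{n-1}(\BB)\subsetneqq\GG$.

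The main obstacle is precisely this cohomological computation. Because the relevant twistor spaces are non-K\"ahler and non-Moishezon, Bott--Chern and Aeppli numbers are not read off the Betti numbers; one has to control the pertinent part of the Fr\"olicher spectral sequence for $Z$ and the precise manner in which $\partial\bar\partial$-closedness fails, using the fibration $\pi$ and the geometry of twistor lines — and obtaining the \emph{strict} non-injectivity of $\iota_{1}$, rather than a one-sided inequality, is the subtle point. A secondary issue, relevant if one insists on genuine counterexamples to Conjecture \ref{fx-conj} and not merely to its conclusion, is to check that the chosen $Z$ is balanced; for this the natural tools are the holomorphic fibration $Z\to\PP^{1}$ together with the $S^{2}$-family of K\"ahler forms on the fibres in the hyperk\"ahler case, or the bimeromorphic invariance of the balanced condition (Alessandrini--Bassanelli).
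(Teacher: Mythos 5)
Your reduction is sound and is in fact the same skeleton the paper uses: since $\GG$ is open and non-empty (Gauduchon), the equality $\iota_{n-1}(\BB)=\GG$ forces $\iota_{n-1}$ to be surjective, equivalently (by the Bott--Chern/Aeppli duality) $\iota_1$ to be injective; so it suffices to exhibit balanced twistor spaces where this cohomological non-degeneracy fails. The problem is that you stop exactly there. The entire content of the theorem is the construction of such twistor spaces, and you explicitly defer it (``the main obstacle is precisely this cohomological computation''), listing candidate $4$-manifolds ($S^1\times S^3$, tori, $K3$, LeBrun, Joyce) without proving for any of them that a nonzero class in $H^{1,1}_{BC}$ dies in $H^{1,1}_A$, or that $\iota_{n-1}$ fails to be surjective. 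As written this is a proof strategy, not a proof; moreover some of your candidates (e.g.\ the twistor space of a $K3$ or of $S^4$, where $b_1=0$) would not even work for the criterion that ultimately does the job.

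The paper closes precisely this gap, and by a slightly different mechanism than the direct construction you envisage: instead of exhibiting a class killed by $\iota_1$, it argues by contradiction through Dolbeault cohomology. If $\iota_{n-1}=j_2:H^{2,2}_{BC}({\mathcal Z})\to H^{2,2}_A({\mathcal Z})$ is onto, then $\bar\partial:H^{2,2}_A\to H^{2,3}_{BC}$ vanishes; dualizing, $\bar\partial:H^{1,0}_A\to H^{1,1}_{BC}$ vanishes, so $H^{1,0}_{\bar\partial}({\mathcal Z})\to H^{1,0}_A({\mathcal Z})$ is onto. Eastwood--Singer give $H^{1,0}_{\bar\partial}({\mathcal Z})=0$, hence $H^{1,0}_A=H^{0,1}_A=0$, and since $H^{0,1}_{\bar\partial}\to H^{0,1}_A$ is always injective, $H^{0,1}_{\bar\partial}({\mathcal Z})=0$; but Eastwood--Singer also compute $\dim H^{0,1}_{\bar\partial}({\mathcal Z})=\dim H^1_{dR}(M)$. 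So $\iota_{n-1}(\BB)=\GG$ forces $b_1(M)=0$, and Taubes's theorem (ASD metrics on $X\#k\,\overline{\CC\PP^2}$ for $k\gg 0$, with $X$ arbitrary) supplies compact ASD $4$-manifolds with $b_1\neq 0$; their twistor spaces are balanced by Michelsohn--Mu\c skarov, giving genuine counterexamples to Conjecture \ref{fx-conj}. If you want to salvage your version, you must either carry out the Bott--Chern/Aeppli computation on a specific non-K\"ahler twistor space (which is exactly the hard part you postponed) or import an input of the Eastwood--Singer type relating the Fr\"olicher spectral sequence of ${\mathcal Z}$ to $b_1(M)$, as the paper does.
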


Since the twistor spaces are known to carry balanced metrics \cite{michelsohn}, 
we obtain many counter-examples to the Fu-Xiao conjecture.

\bigskip 

On the other hand, based on the main result of Witt Nystr\"om 
\cite{wittnystrom}, we confirm the validity of Conjecture \ref{fx-conj} 
for Moishezon manifolds. For such manifolds, the 
$\partial\bar\partial$-lemma holds, and so  the map $\iota_{n-1}$ 
is an isomorphism. We prove:

\begin{main}
\label{B=GMoishezon}
For any Moishezon manifold $\BB=\GG.$
\end{main}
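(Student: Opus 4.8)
\emph{Plan.} Since $X$ is Moishezon it satisfies the $\partial\bar\partial$-lemma, so $\iota_{n-1}$ is an isomorphism and we may view $\BB$ and $\GG$ as open convex cones in the single finite-dimensional space $H^{n-1,n-1}_{BC}(X,\RR)\cong H^{n-1,n-1}_A(X,\RR)$; moreover $\BB\subseteq\GG$ because a balanced metric is Gauduchon. As two open convex cones with equal closures coincide (an open convex set equals the interior of its closure), it suffices to prove $\ol{\BB}=\ol{\GG}$. By Lamari's criterion \cite{lamari} one has $\ol{\GG}=(\EE^1_{BC}(X))^*$ for the natural duality between $H^{n-1,n-1}_{BC}(X,\RR)$ and $H^{1,1}_{BC}(X,\RR)$; since $\ol{\BB}\subseteq\ol{\GG}$ is automatic, the bipolar theorem reduces everything to the single inclusion $(\ol{\BB})^*\subseteq\EE^1_{BC}(X)$: a Bott--Chern class $T\in H^{1,1}_{BC}(X,\RR)$ pairing non-negatively with every balanced class on $X$ is pseudoeffective.

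To prove this I would pass to a projective model and invoke Witt Nystr\"om's theorem. Choose a modification $\pi\colon\wh X\to X$ with $\wh X$ smooth projective (available since $X$ is Moishezon). On the projective manifold $\wh X$, Conjecture \ref{bdpp-conj} holds by \cite{wittnystrom}, so $(\EE^1_{BC}(\wh X))^*=\ol{\cal M}(\wh X)$, and the Fu--Xiao observation \cite{fuxiao} then gives $\ol{\cal M}(\wh X)=\ol{\BB(\wh X)}$; hence, by the bipolar theorem, $\EE^1_{BC}(\wh X)=(\ol{\BB(\wh X)})^*$. At the cohomological level $\pi^*$ is injective in bidegree $(1,1)$ and its transpose $\pi_*$ in bidegree $(n-1,n-1)$ satisfies $\pi_*\pi^*=\mathrm{id}$ and $\langle\pi^*T,\wh\beta\rangle_{\wh X}=\langle T,\pi_*\wh\beta\rangle_X$. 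Since $\pi$ is generically biholomorphic, pull-back of a closed positive $(1,1)$-current from $X$ and push-forward of one from $\wh X$ are again closed positive $(1,1)$-currents; together with $\pi_*\pi^*=\mathrm{id}$ this gives $T\in\EE^1_{BC}(X)\iff\pi^*T\in\EE^1_{BC}(\wh X)$.

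Now take $T\in H^{1,1}_{BC}(X,\RR)$ pairing non-negatively with every balanced class on $X$. By the last equivalence, to conclude that $T$ is pseudoeffective it suffices to show $\pi^*T\in\EE^1_{BC}(\wh X)=(\ol{\BB(\wh X)})^*$, i.e.\ $\langle\pi^*T,\wh\beta\rangle\ge0$ for every $\wh\beta\in\BB(\wh X)$; and by the projection formula $\langle\pi^*T,\wh\beta\rangle=\langle T,\pi_*\wh\beta\rangle$. Thus the argument reduces to the claim that \emph{$\pi_*$ maps $\BB(\wh X)$ into $\ol{\BB(X)}$} (then $\langle T,\pi_*\wh\beta\rangle\ge0$ by continuity of the pairing). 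Granting this, $\pi^*T$ and hence $T$ is pseudoeffective, so $(\ol{\BB})^*=\EE^1_{BC}(X)$ (the reverse inclusion being automatic), whence $\ol{\BB}=\ol{\GG}$ and finally $\BB=\GG$.

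The main obstacle is precisely that claim. If $\wh\Omega$ is a balanced metric representing $\wh\beta$, then $\pi_*\wh\Omega$ is a $d$-closed positive $(n-1,n-1)$-current on $X$, smooth away from the center $Z=\pi(E)$ of $\pi$ (with $E$ the exceptional locus and $\codim Z\ge2$) but a priori singular along $Z$, and one must show its Bott--Chern class lies in $\ol{\BB(X)}$, i.e.\ is a limit of classes of honest balanced metrics. I would attack this by a localized regularization near $Z$ --- patching $\pi_*\wh\Omega$ to the $(n-1)$-st power of a fixed Hermitian metric on $X$ and correcting the error so as to restore $d$-closedness while preserving positivity, using that $X$, lying in Fujiki's class $\cc$, is itself balanced \cite{alessandrini2} --- or, what amounts to the same thing, by checking that the Fu--Xiao/Toma equality $\ol{\cal M}=\ol{\BB}$ carries over from the K\"ahler to the Moishezon case, their argument requiring only the $\partial\bar\partial$-lemma together with the Boucksom--Demailly--P\u aun--Peternell duality, both of which are available here after transfer along $\pi$. (As a by-product, the claim forces $\ol{\BB(X)}\neq\{0\}$, hence $\BB(X)\neq\emptyset$, recovering the balancedness of $X$.)
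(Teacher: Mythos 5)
Your reduction is sound as far as it goes: identifying $\BB$ and $\GG$ via the $\partial\bar\partial$-lemma, reducing to $\overline{\BB}=\overline{\GG}$ by Lamari's criterion plus the bipolar theorem, passing to a projective model $\widehat X$ where Witt Nystr\"om together with the Toma/Fu--Xiao observation gives $\EE^1_{BC}(\widehat X)=(\overline{\BB(\widehat X)})^*$, and using pushforward of closed positive currents for the direction of ``$T$ pseudoeffective $\iff$ $\pi^*T$ pseudoeffective'' that you actually need. But the whole argument then rests on the claim you yourself flag as the main obstacle, namely $\pi_*\BB(\widehat X)\subseteq\overline{\BB(X)}$, and you do not prove it. Neither of the strategies you offer is adequate: a ``localized regularization near $Z$'', patching $\pi_*\widehat\Omega$ to the $(n-1)$-st power of a Hermitian metric and ``correcting the error so as to restore $d$-closedness while preserving positivity'', is precisely the kind of gluing that fails for balanced structures (the correction is a global problem in bidegrees $(n-1,n-2)+(n-2,n-1)$ and there is no local mechanism preserving positivity of an $(n-1,n-1)$-form); and ``checking that the Fu--Xiao/Toma equality carries over to the Moishezon case'' is essentially a restatement of what has to be proved.

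The missing claim is true, and it is in substance Proposition \ref{push-b-cone} of the paper (stated for one blow-up with smooth center between balanced manifolds; one then factors $\pi$ into such blow-ups, all intermediate manifolds being Moishezon, hence balanced by \cite{alessandrini1}). Its proof is not by regularization but by duality: by Theorem \ref{duality} iv) one tests $[\pi_*\widehat\Omega]$ against positive $\partial\bar\partial$-closed $(1,1)$-currents $T$ on the base and uses the Alessandrini--Bassanelli total transform $\widetilde T$, with $\pi_*\widetilde T=T$ and $\{\widetilde T\}=\pi^*\{T\}$, to get $([\pi_*\widehat\Omega],\{T\})=\int\widetilde T\wedge\widehat\Omega\geq 0$. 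The paper's own proof of Theorem \ref{B=GMoishezon} is organized differently but relies on the same two ingredients: it reformulates $\iota_{n-1}(\BB)=\GG$ as $\EE^1_A=\EE^1_{BC}$ (again via the cone duality, valid because the manifolds involved are balanced) and descends this equality along each smooth-center blow-up (Proposition \ref{up-down}) by taking the total transform of a $\partial\bar\partial$-closed positive current, replacing it upstairs by a $d$-closed positive current in the same Aeppli class, and pushing forward. So the route you sketch can be completed, but its central step requires the Alessandrini--Bassanelli machinery for $\partial\bar\partial$-closed positive currents together with the duality theorem, neither of which appears in your proposal; as written, there is a genuine gap at the decisive point.
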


In particular, Theorems \ref{main-bad-twistors} and \ref{B=GMoishezon} are pieces 
of evidence in favor of a conjecture of Popovici \cite[Conjecture 6.1]{popovici}. 

\begin{conj}
\label{pop-conj}
If $X$ is a compact complex manifold on which the $\partial\bar\partial$-lemma holds, 
then $\BB=\GG.$
\end{conj}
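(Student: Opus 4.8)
The plan is to attack Conjecture \ref{pop-conj} through cone duality, the guiding principle being that the $\partial\bar\partial$-lemma identifies all the cohomologies in sight and should force the two relevant pseudoeffective cones to coincide. Since the $\partial\bar\partial$-lemma makes $\iota_{n-1}$ an isomorphism and every balanced metric is Gauduchon, one inclusion is free: under the identification $H^{n-1,n-1}_{BC}(X,\RR)\cong H^{n-1,n-1}_A(X,\RR)$ we have $\BB\subseteq\GG$, and all the content is in the reverse inclusion.

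First I would record both cones as interiors of dual cones. Lamari's criterion gives $\GG=\mathrm{int}\big((\EE^1_{BC})^*\big)$, where $\EE^1_{BC}\subseteq H^{1,1}_{BC}(X,\RR)$ is the pseudoeffective cone — the classes of $d$-closed positive $(1,1)$-currents — with duality against the pairing $H^{1,1}_{BC}\times H^{n-1,n-1}_A\to\RR$ (here one uses that $\GG$ is an open convex cone, so $\mathrm{int}(\overline\GG)=\GG$). The balanced analogue of Lamari's criterion — obtained from Michelsohn's characterization of balanced manifolds together with a Hahn–Banach separation — gives $\BB=\mathrm{int}\big((\EE^1_A)^*\big)$, where $\EE^1_A\subseteq H^{1,1}_A(X,\RR)$ is the cone of Aeppli classes of $\partial\bar\partial$-closed positive $(1,1)$-currents, dualized against $H^{1,1}_A\times H^{n-1,n-1}_{BC}\to\RR$. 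A $d$-closed positive current is in particular $\partial\bar\partial$-closed, so, writing $\iota_1$ for the induced map on $(1,1)$-classes, $\iota_1(\EE^1_{BC})\subseteq\EE^1_A$. Under the $\partial\bar\partial$-lemma $\iota_1$ is an isomorphism, and Conjecture \ref{pop-conj} then reduces to the reverse inclusion at the level of closed cones,
$$\overline{\EE^1_A}=\overline{\iota_1(\EE^1_{BC})},$$
i.e. to the statement that on a $\partial\bar\partial$-manifold the Bott–Chern and Aeppli pseudoeffective cones agree. Indeed, given this, dualizing makes the dual cones literally equal, hence so are their interiors: $\BB=\GG$ (in particular $X$ is then balanced, so $\BB\neq\emptyset$).

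The crux is thus: on a $\partial\bar\partial$-manifold, every $\partial\bar\partial$-closed positive $(1,1)$-current $T$ should be Aeppli-cohomologous to a $d$-closed positive current (or at least its class should be a limit of such). The obvious first move is to make $T$ closed: $\partial T$ is $\bar\partial$-closed and $\partial$-exact, so by the $\partial\bar\partial$-lemma (valid for currents) $\partial T=\partial\bar\partial R$ for a $(1,0)$-current $R$, and then $T':=T-\bar\partial R-\partial\bar R$ is real, $d$-closed, and Aeppli-cohomologous to $T$. It remains to show the Bott–Chern class of $T'$ is pseudoeffective, equivalently, by Lamari, that $\int_X T'\wedge\eta^{n-1}\geq0$ for every Gauduchon metric $\eta$. \textbf{This is precisely where the argument stalls, and I expect it to be the main obstacle}: an integration by parts gives $\int_X T'\wedge\eta^{n-1}=\int_X T\wedge\eta^{n-1}-2\,\mathrm{Re}\int_X R\wedge\bar\partial(\eta^{n-1})$, and the correction term carries no sign, since $R$ is only constrained by $\partial\bar\partial R=\partial T$, with no positivity. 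One may still adjust $T'$ within its Aeppli class by $i\partial\bar\partial\varphi$ for an arbitrary real function $\varphi$ (equivalently, change $R$ by a $\partial\bar\partial$-closed current), but there is no visible choice making $T'$ positive; producing the positive $d$-closed representative directly seems to require genuinely new input, which is presumably why the statement remains a conjecture.

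For Fujiki class $\mathcal C$ manifolds — Moishezon ones in particular — this obstacle can be bypassed: pull the problem back along a modification $p\colon\wt X\to X$ with $\wt X$ K\"ahler (projective, in the Moishezon case), and invoke there the Boucksom–Demailly–P\u aun–Peternell duality, which is Witt Nystr\"om's theorem in the projective case and, through Toma and Fu–Xiao, identifies the balanced cone with the movable cone; this is exactly the mechanism behind Theorem \ref{B=GMoishezon}. Carrying the argument over to an arbitrary $\partial\bar\partial$-manifold would require both the Boucksom–Demailly–P\u aun–Peternell conjecture for all compact K\"ahler manifolds and the (open) implication that the $\partial\bar\partial$-lemma forces $X$ into Fujiki class $\mathcal C$; absent these, a proof in full generality seems out of reach with current technology, and Theorems \ref{main-bad-twistors} and \ref{B=GMoishezon} are best read as the available partial evidence.
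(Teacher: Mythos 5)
The statement you were asked about is Conjecture \ref{pop-conj} (Popovici's conjecture): the paper does not prove it, and offers Theorems \ref{main-bad-twistors} and \ref{B=GMoishezon} only as evidence. Your proposal correctly treats it as open rather than manufacturing a proof, and the framework you set up is exactly the one the paper itself works in: Lamari's criterion $(\EE^1_{BC})^*=\overline\GG$, the dual description of $\overline\BB$ as $(\EE^1_A)^*$ on balanced manifolds (Theorem \ref{duality}), and the resulting reduction of $\iota_{n-1}(\BB)=\GG$ to the equality of pseudoeffective cones $\iota_1(\EE^1_{BC})=\EE^1_A$ --- this last equivalence is precisely what the authors invoke in the proof of Proposition \ref{up-down}, and your sketch of the Moishezon/Fujiki bypass (pull back to a projective model, apply Witt Nystr\"om, push down) is the actual mechanism of Theorem \ref{B=GMoishezon}. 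The obstacle you isolate --- that making a $\partial\bar\partial$-closed positive current $d$-closed via the $\partial\bar\partial$-lemma destroys positivity, with no visible way to restore it within the Aeppli class --- is the genuine open point; no argument in the paper circumvents it in general.

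One small wrinkle worth flagging in your reduction: the identity $\BB=\mathrm{int}\bigl((\EE^1_A)^*\bigr)$ is established in the paper (via \cite{crs}) only under the hypothesis that $X$ is balanced, and it is not known that a compact $\partial\bar\partial$-manifold is balanced --- indeed $\BB\neq\emptyset$ is itself part of the content of Conjecture \ref{pop-conj}, since $\GG\neq\emptyset$ always. So your "given $\overline{\EE^1_A}=\overline{\iota_1(\EE^1_{BC})}$, dualize and take interiors" step would need either a version of the duality valid without assuming balancedness, or a separate argument that the cone equality on the $(1,1)$-level already forces existence of a balanced metric; as written there is a mild circularity. This does not affect your overall (correct) conclusion that the conjecture is out of reach of the paper's methods, but it does mean even the clean reduction you state requires a bit more care than the $\partial\bar\partial$-lemma alone provides.
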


Our last result is motivated by a conjecture of Fino and Vezzoni \cite{fino1}.
Recall  that a Hermitian metric $g$ with K\"ahler form $\omega$ on a compact 
complex manifold $X$ of dimension $n$ is called strongly K\"ahler with torsion 
($SKT$ for short) if $\omega$ is $\partial\bar\partial$-closed. It is known that a 
metric which is both balanced and $SKT$ is $d$-closed, hence K\"ahler \cite{bi}. 
Moreover, all the known examples of manifolds admitting a balanced metric 
and a $SKT$ metric are K\"ahler. For instance, in \cite{fly2} Fu, Li and Yau 
show that the examples of balanced non-K\"ahler manifolds they constructed 
do not carry $SKT$ metrics. Verbitsky \cite{verbitsky} showed that a twistor space  
with a $SKT$ metric is K\"ahler. In \cite{chiose1}, it is shown that a manifold in 
the Fujiki class  ${\mathcal C}$ (which is a balanced manifold by \cite{alessandrini1}) 
and which supports a $SKT$ metric is K\"ahler. In \cite{fino}, it is  proved that a 
nilmanifold which is balanced and $SKT$ is K\"ahler. It is therefore tempting to 
make the following conjecture

\begin{conj}
[Problem 3, \cite{fino1}]
\label{fv-conj}
A balanced and $SKT$ compact complex manifold is K\"ahler.
\end{conj}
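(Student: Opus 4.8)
The plan is to argue by contradiction, translating both hypotheses into the language of positive currents and then playing the balanced and $SKT$ conditions against each other. Assume $X$ is balanced and $SKT$ but not Kähler. When $n\le 2$ there is nothing to prove, since $\omega^{n-1}=\omega$ forces a balanced metric to be $d$-closed, so I assume $n\ge 3$. By the Harvey--Lawson characterization, the failure of the Kähler property produces a nonzero positive $(n-1,n-1)$-current $T=\partial\bar R+\bar\partial R$; being a boundary it is $d$-closed, and since $T$ has pure bidegree this gives $\partial T=\bar\partial T=0$. On the other hand, the $SKT$ assumption is equivalent, by Hahn--Banach duality in Aeppli cohomology, to the \emph{absence} of a nonzero positive $\partial\bar\partial$-exact $(n-1,n-1)$-current, i.e. of a nonzero positive $T$ of the form $i\partial\bar\partial\psi$. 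The entire difficulty is thereby concentrated in a single question: can the destabilizing boundary current $T$ be promoted to a $\partial\bar\partial$-exact current?

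If the $\partial\bar\partial$-lemma held on $X$ the promotion would be immediate: $T$ is $d$-exact and of pure type with $\partial T=\bar\partial T=0$, so the lemma would yield $T=i\partial\bar\partial\psi$, contradicting the $SKT$ hypothesis. This already settles the statement whenever the $\partial\bar\partial$-lemma holds, and in particular recovers the Moishezon case through Theorem \ref{B=GMoishezon}. The role I would assign to the balanced hypothesis is to serve as a \emph{surrogate} for the $\partial\bar\partial$-lemma. Concretely, I would use the balanced form $\omega_b$ (so that $\omega_b^{\,n-1}$ is $d$-closed and positive) together with Lamari's duality $(\EE_{BC}^{1})^{*}=\overline{\GG}$ to realize the obstruction $T$ as the current dual to a class on the boundary of the pseudoeffective cone, and then feed $T$ into an integration by parts against $\omega_s$. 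The point of that computation is that the $SKT$ identity $\partial\bar\partial\omega_s=0$ is precisely what annihilates the second-order error terms of the form $\int_X\bar R\wedge\partial\bar\partial(\cdots)$ produced by Stokes, so that the pairing of $T$ with $\omega_s$ is forced to coincide with the pairing of a genuinely $\partial\bar\partial$-exact current, reproducing the contradiction above.

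The main obstacle---and the reason the full conjecture remains open---is a \emph{degree mismatch}: for $n\ge 3$ the balanced cone $\BB$ lives in $H^{n-1,n-1}_{BC}(X,\RR)$ while the natural $SKT$ datum $[\omega_s]$ lives in $H^{1,1}_A(X,\RR)$, so there is no direct duality pairing converting the $(n-1,n-1)$-obstruction $T$ into a balanced obstruction. Bridging the two ends is exactly the content of the still-open BDPP-type duality (Conjecture \ref{bdpp-conj}), and in practice the argument above only closes once one knows the cone equality $\iota_{n-1}(\BB)=\GG$. I would therefore organize the proof so that the crux becomes the assertion that a compact $SKT$ manifold satisfies $\iota_{n-1}(\BB)=\GG$ to the extent needed to make $T$ exact. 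There is good reason to expect $SKT$ to supply exactly this: the failure $\iota_{n-1}(\BB)\subsetneqq\GG$ exhibited in Theorem \ref{main-bad-twistors} occurs on twistor spaces, which by Verbitsky's theorem \cite{verbitsky} are never $SKT$ unless already Kähler. Making this heuristic rigorous---showing that the $SKT$ condition forces enough $\partial\bar\partial$-compatibility to promote the boundary current $T$ to a $\partial\bar\partial$-exact one, most likely through a Verbitsky-type vanishing or an a priori estimate controlling the torsion of $\omega_s$ by the volume form of $\omega_b$---is the step I expect to be genuinely hard, and is the one that currently confines rigorous proofs to the case $n=3$ under $\iota_2(\BB)=\GG$ and to the Moishezon setting.
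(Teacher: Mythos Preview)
The statement you are attempting is Conjecture~\ref{fv-conj}, which the paper does \emph{not} prove; it is presented as open, and only the special case Theorem~\ref{B+G=K} (dimension three, under the extra hypothesis $\iota_{n-1}(\BB)=\GG$) is established. Your proposal is not a proof either, and you say so yourself. The gap is exactly where you locate it: promoting the Harvey--Lawson obstruction current $T=\partial\bar R+\bar\partial R$ to an $i\partial\bar\partial$-exact current. Without the $\partial\bar\partial$-lemma there is no mechanism for this, and your proposed surrogate---pairing $T$ against the $SKT$ form $\omega_s$ and using $\partial\bar\partial\omega_s=0$ to kill cross terms---yields at most $\int_X T\wedge\omega_s=0$, which is automatic for any such boundary against a $\partial\bar\partial$-closed form and does not force $T=0$. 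Your closing heuristic, that $SKT$ should itself force $\iota_{n-1}(\BB)=\GG$, is unproved and no easier than the conjecture; invoking Verbitsky only rules out the twistor family of counterexamples to that cone equality, not all possible ones. (A smaller slip: $T=\partial\bar R+\bar\partial R$ is the $(n-1,n-1)$-component of a boundary, not a boundary, and is not automatically $d$-closed, since $dT=\partial\bar\partial(R-\bar R)$.)

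For comparison, the paper's route to its partial result Theorem~\ref{B+G=K} is entirely different from your duality-and-obstruction strategy. It corrects the $SKT$ form $\eta$ by $\partial\bar\alpha+\bar\partial\alpha$ to a $d$-closed real $(1,1)$-form $\gamma$, shows $[\gamma]$ is pseudoeffective and then \emph{nef} via P\u aun's techniques (Demailly--P\u aun regularization, a tower of blow-ups with smooth centers, and an inductive descent of almost-positive potentials down the tower), computes $\int_X\gamma^3>0$, and concludes K\"ahlerness from the nef-and-big criterion of \cite{chiose2}. This is constructive rather than by contradiction, and the hypothesis $\iota_{n-1}(\BB)=\GG$ enters at the very first step, via Lemma~\ref{iso-coh}, to make $\iota_1$ an isomorphism so that the correction $\gamma$ exists at all.
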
 
\noindent

We address here this conjecture for the class of complex manifolds satisfying 
$\iota_{n-1}({\mathcal B})= {\mathcal G}.$

\begin{main} 
\label{B+G=K}
Let $X$ be a compact complex  manifold of dimension three such that 
$\iota_{n-1}({\mathcal B})= {\mathcal G}.$
If $X$ carries a $SKT$ metric, then $X$ is K\"ahler.
\end{main}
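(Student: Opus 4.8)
The plan is the following; throughout $n=3$, and $\omega$ denotes the given $SKT$ form, so $\omega>0$ and $\partial\bar\partial\omega=0$. Every compact complex manifold admits a Gauduchon metric, so $\mathcal G\neq\emptyset$, and the hypothesis $\iota_{n-1}(\mathcal B)=\mathcal G$ then forces $\mathcal B\neq\emptyset$; thus $X$ is balanced. By \cite{bi} it now suffices to produce a \emph{single} Hermitian metric on $X$ that is simultaneously balanced and $SKT$, since any such metric is $d$-closed, hence K\"ahler. I would carry this out by contradiction, assuming $X$ is not K\"ahler.

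I would first record what the cone equality gives. The Gauduchon cone $\mathcal G$ is open in $H^{n-1,n-1}_A(X,\RR)$; being equal to $\iota_{n-1}(\mathcal B)$ it lies in the linear subspace $\iota_{n-1}\bigl(H^{n-1,n-1}_{BC}(X,\RR)\bigr)$, which is therefore everything, so $\iota_{n-1}$ is surjective and, dually, the map $\iota_{1}\colon H^{1,1}_{BC}(X,\RR)\to H^{1,1}_A(X,\RR)$ induced by the identity is injective. With respect to the perfect pairings $H^{1,1}_{BC}\times H^{n-1,n-1}_A\to\RR$ and $H^{1,1}_A\times H^{n-1,n-1}_{BC}\to\RR$ given by $(\{\alpha\},\{\beta\})\mapsto\int_X\alpha\wedge\beta$, the maps $\iota_{n-1}$ and $\iota_{1}$ are mutual transposes, so dualizing $\iota_{n-1}(\mathcal B)=\mathcal G$ and invoking Lamari's criterion $(\mathcal E^1_{BC})^{*}=\overline{\mathcal G}$ yields
$$
\mathcal E^1_{BC}=\iota_{1}^{-1}\bigl(\mathcal B^{*}\bigr)\subset H^{1,1}_{BC}(X,\RR),
$$
with $\mathcal B^{*}\subset H^{1,1}_A(X,\RR)$ the dual of the balanced cone: a Bott--Chern $(1,1)$-class is pseudoeffective exactly when its Aeppli image pairs non-negatively with every balanced class.

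Next I would bring in the $SKT$ form. Since $\partial\bar\partial\omega=0$, the Aeppli class $\{\omega\}_A\in H^{1,1}_A(X,\RR)$ is well defined, and for every $d$-closed positive $(n-1,n-1)$-current $T$ the integral $\int_XT\wedge\omega=\langle\{T\}_{BC},\{\omega\}_A\rangle$ depends only on cohomology; since $\omega$ is a metric, the mass of $T$ is then controlled by $\{T\}_{BC}$, which lets one pass to weak limits and conclude that $\{\omega\}_A$ pairs \emph{strictly} positively with $\overline{\mathcal B}\setminus\{0\}$. Now, $X$ being balanced but (by assumption) not K\"ahler, a Lamari--Hahn--Banach separation argument in the spirit of \cite{lamari,chiose1} produces a nonzero positive $(n-1,n-1)$-current obstructing K\"ahlerness; the surjectivity of $\iota_{n-1}$ lets one arrange this obstruction to be Bott--Chern exact, $T=\partial\bar\partial R$. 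Pairing with the $SKT$ form and integrating by parts gives $\int_XT\wedge\omega=\int_XR\wedge\partial\bar\partial\omega=0$, whereas positivity of $T$ and $\omega$ forces $\int_XT\wedge\omega>0$ unless $T=0$ --- a contradiction, so $X$ is K\"ahler.

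The main obstacle is this middle step: producing the obstruction current in a form for which the concluding integration by parts is legitimate, i.e. genuinely $\partial\bar\partial$-exact rather than merely $d$-exact or Aeppli-exact. This is precisely where the hypothesis $\iota_{n-1}(\mathcal B)=\mathcal G$ must be used in an essential way --- through the surjectivity of $\iota_{n-1}$, equivalently the displayed duality $\mathcal E^1_{BC}=\iota_{1}^{-1}(\mathcal B^{*})$ --- and where, in the execution, the restriction $\dim X=3$ enters: without it one is left facing the torsion terms $\partial\omega\wedge\bar\partial\omega$, which are not sign-definite and are the very reason the unrestricted Fino--Vezzoni conjecture (Conjecture \ref{fv-conj}) remains open. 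I therefore expect the technical heart to be the regularization of currents needed to run the Hahn--Banach argument on $H^{n-1,n-1}_{BC}(X,\RR)$ with the mass bounds furnished by $\partial\bar\partial\omega=0$, together with the argument that the cone equality forces the resulting obstruction into the image of $\partial\bar\partial$.
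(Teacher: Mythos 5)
There is a genuine gap, and it sits exactly where you place it: the ``middle step'' is not a technical obstacle to be smoothed out later, it is the whole theorem. A nonzero positive $(2,2)$-current of the form $i\partial\bar\partial R$ cannot exist on \emph{any} manifold carrying a $SKT$ metric: by the same Hahn--Banach duality you invoke, the existence of a $\partial\bar\partial$-closed positive $(1,1)$-form is equivalent to the non-existence of such a current. So ``produce a positive $\partial\bar\partial$-exact obstruction current from non-K\"ahlerness'' is literally a restatement of the contrapositive of Theorem~\ref{B+G=K}, and your plan supplies no mechanism for it. The Harvey--Lawson/Lamari obstruction to K\"ahlerness is only Aeppli-exact (the $(2,2)$-component of a boundary, $T=\partial S+\bar\partial S'$); such a $T$ is not $d$-closed, has no Bott--Chern class, and surjectivity of $\iota_{n-1}$ --- which merely says every Aeppli $(2,2)$-class has a $d$-closed representative --- gives no way to trade it for a Bott--Chern-exact \emph{positive} current: positivity is exactly what is lost under such cohomological adjustments. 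Note also that surjectivity of $\iota_{n-1}$ is far weaker than the hypothesis; it holds on every $\partial\bar\partial$-manifold, while the paper's Lemma~\ref{iso-coh} extracts from the cone equality the stronger positive statement $\iota_1(\mathcal E^1_{BC})=\mathcal E^1_A$, i.e.\ equality of pseudoeffective cones, and that is the form in which positivity actually enters.

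For comparison, the paper argues on the $(1,1)$-level rather than with obstruction currents. Since $\iota_1$ is an isomorphism, the $SKT$ form $\eta$ has a $d$-closed Aeppli-representative $\gamma=\eta+\bar\partial\alpha+\partial\bar\alpha$, and $\iota_1(\mathcal E^1_{BC})=\mathcal E^1_A$ makes $[\gamma]$ pseudoeffective (Step 1). The bulk of the proof (Step 2) shows $[\gamma]$ is \emph{nef}: one regularizes the positive current in $[\gamma]$ \`a la Demailly--P\u aun \cite{demaillypaun}, passes to a sequence of blow-ups, checks nefness of $\gamma_\varepsilon$ on the resulting smooth divisors using $\int_{D_i}\gamma_\varepsilon\wedge\gamma_\varepsilon>0$ and K\"ahlerness of those surfaces, and then descends the almost-positivity back to $X$ by P\u aun's gluing technique \cite{paun}. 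Finally (Step 3), Chiose's inequality gives $\int_X\gamma^3=\int_X\eta^3+6\int_X\eta\wedge\partial\alpha\wedge\bar\partial\bar\alpha>0$, and Theorem 4.1 of \cite{chiose2} (a nef class of positive self-intersection on a compact $SKT$ threefold forces K\"ahlerness) concludes; this is also where $\dim X=3$ is used. None of this machinery --- the nefness argument or the appeal to \cite{chiose2} --- is present in, or replaceable by, the separation argument you sketch, so the proposal as it stands does not close.
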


\section{Preliminaries}

\begin{defn}
Let $(X,g)$ be a compact complex manifold of 
complex dimension $n$ equipped with a Hermitian
metric $g,$ and let $\omega$ denote its K\"ahler form. 
\begin{itemize}
\item[ i)] If $d(\omega^{n-1}) = 0,$ then $g$ is called a balanced 
metric. A complex manifold which admits a balanced 
metric is called a balanced manifold. 
\item[ ii)] If $\partial\bar\partial\omega = 0,$ then $g$ is called a strongly K\"ahler 
with torsion ($SKT$) metric. A complex manifold which admits a $SKT$
metric is called a $SKT$ manifold. 
\item[ iii)] If $d\omega = 0,$ then $g$ is called a K\"ahler metric. 
A complex manifold which admits a K\"ahler metric is 
called a K\"ahler manifold.
\end{itemize}
\end{defn}

Since the K\"ahler form of a Hermitian metric determines the metric, 
by an abuse of terminology we will not distinguish between the two notions. 
Moreover, according to Michelsohn \cite[page 279]{michelsohn}, 
given a positive $(n-1,n-1)$-form $\Phi$ on an $n$-dimensional manifold, 
there exists a positive $(1,1)$-form $\eta$ such that $\Phi=\eta^{n-1}.$ 
Therefore, prescribing a balanced or a Gauduchon metric is equivalent 
to prescribing a positive $(n-1,n-1)$-form which is $d$ or $\partial\bar\partial$-closed, 
respectively.

\subsection{Bott-Chern and Aeppli cohomologies and positive cones} 
Given a compact complex manifold $X$ of dimension $n$, we define 
the Bott-Chern cohomology groups
\begin{equation*}
H^{p,q}_{BC}(X,{\mathbb C})=
\frac{\{\alpha\in {\mathcal C}^{\infty}_{p,q}(X)\vert d\alpha=0\}}
{\{i\partial\bar\partial\beta\vert \beta\in {\mathcal C}^{\infty}_{p-1,q-1}(X)\}},
\end{equation*}
and the Aeppli cohomology groups
\begin{equation*}
H^{p,q}_A(X,{\mathbb C})=
\frac{\{\alpha\in {\mathcal C}^{\infty}_{p,q}(X)\vert i\partial\bar\partial \alpha=0\}}
{\{\partial\beta+\bar\partial\gamma\vert\beta\in {\mathcal C}^{\infty}_{p-1,q}(X),
\gamma\in {\mathcal C}^{\infty}_{p,q-1}(X)\}}
\end{equation*}

We use the notation $[s]$ for the class of a 
$d$-closed form or current $s$ in 
$H^{\bullet,\bullet}_{BC}$ and $\{t\}$ for the class of a 
$i\partial\bar\partial$-closed form or current $t$ in 
$H^{\bullet,\bullet}_A.$

The groups 
$H^{p,q}_{BC}(X, {\mathbb C})$ and 
$H^{n-p,n-q}_A(X, {\mathbb C})$ are dual via the pairing
\begin{equation*} 
H^{p,q}_{BC}(X, {\mathbb C})\times H_A^{n-p,n-q}
(X, {\mathbb C})\to {\mathbb C}, 
([\alpha ],\{\beta\})\to 
\int_X\alpha\wedge\beta.
\end{equation*}

Let $X$ be a compact complex manifold of dimension $n.$ 
The Gauduchon cone of $X$ is 
\begin{equation*}
\label{Gc}
\GG_X=\{\{\Omega\}\in H^{n-1,n-1}_{A}(X,\RR)| ~\Omega~\text{is a Gauduchon metric}\}
\end{equation*}
Similarly, we define the balanced cone:
\begin{equation*}
\label{Kbc}
\BB_X=\{[\Omega]\in H^{n-1,n-1}_{BC}(X,\RR)| ~
\Omega~\text{is a balanced metric}\}.
\end{equation*}

The Gauduchon cone is an open convex cone. According to Gauduchon 
\cite{gauduchon}, it is never empty. The balanced cone is open and convex. 
It can be empty, as there are examples compact complex manifolds which 
do not admit balanced metrics (e.g., see \cite{michelsohn}).

\medskip

The natural morphisms induced by the identity
\begin{equation*}
\iota_1:H^{1,1}_{BC}(X, {\mathbb R})\to H^{1,1}_A(X, {\mathbb R})
\end{equation*}
and
\begin{equation*}
\iota_{n-1}:H^{n-1,n-1}_{BC}(X,{\mathbb R})\to H^{n-1,n-1}_A(X, {\mathbb R})
\end{equation*} 
are well-defined, but in general, they are neither injective, nor surjective.  
They are however isomorphisms if $X$ is K\"ahler, or more generally 
on manifolds satisfying  the $\partial\bar\partial$-lemma. Nevertheless, we have
$$
\iota_{n-1}(\BB_X)\subseteq \GG_X,
$$ 
and so  $\iota_{n-1}(\overline \BB_X)\subseteq \overline \GG_X.$

\bigskip

For $\#\in\{BC, A\}$ and $p\in\{1,n-1\}$ we define the following cones:
\begin{enumerate}
\item the $\#-${\it pseudoeffective} cone 
\begin{equation*}
\label{pef}
{\mathcal E}^p_{X,\#}=\{\gamma\in H^{p,p}_{\#}
(X,{\mathbb R})\vert\exists T\geq 0, T\in\gamma\},
\end{equation*}
where by $T$ we denote here a current.   
\item the $\#-${\it nef} cone 
\begin{equation*}
\label{nef}
{\mathcal N}^p_{X,\#}=\{\gamma\in H^{p,p}_{\#}
(X,{\mathbb R})\vert \forall \varepsilon >0,
\exists\alpha_{\varepsilon}\in \gamma,
\alpha_{\varepsilon}\geq -\varepsilon \omega^p\},
\end{equation*}
where $\omega$ is the K\"ahler form of a fixed Hermitian 
metric on $X$ and $\alpha_{\varepsilon}$ denotes a smooth 
$(p,p)-$form. 
\end{enumerate}

\begin{rmk}
The pseudoeffective and nef cones ${\mathcal E}^1_{X,BC}$ 
and ${\mathcal N}^1_{X,BC}$ were first introduced by Demailly 
\cite[Definition 1.3]{de-reg}.
\end{rmk}

We recall next some of the properties and relations between the above cones.

\begin{prop}
Let $X$ be a compact complex manifold of dimension $n$. Then
\begin{itemize}

\item[i)] The cone ${\mathcal E}_{X,BC}^1$ is closed and 
${\mathcal N}_{X,BC}^1\subseteq {\mathcal E}_{X,BC}^1.$ 

\item[ii)] The cones ${\mathcal N}^p_{X,\#}$ are closed, where 
$p\in\{1,n-1\}$ and $\#\in \{BC, A\}$.  
\item[iii)] ${\mathcal N}_{X,A}^{n-1}=\overline\GG_X.$

\end{itemize}
Moreover, if $X$ is  balanced, then
\begin{itemize}
\item[iv)] ${\mathcal N}_{X,BC}^{n-1}=\overline{\cal B}_X.$
\item[v)] ${\mathcal E}_{X,A}^1$ is closed.

\end{itemize}

\end{prop}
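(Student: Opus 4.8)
The plan is to establish the five assertions by recalling the duality between Bott--Chern and Aeppli cohomology together with the standard description of the nef cones as duals of pseudoeffective cones, and then specializing to $p\in\{1,n-1\}$. For (i), the set of closed positive $(1,1)$-currents in a fixed class is closed under weak limits (mass of a positive current is controlled by its cohomology class), so ${\mathcal E}^1_{X,BC}$ is closed; moreover, if $\gamma\in{\mathcal N}^1_{X,BC}$ then the forms $\alpha_\varepsilon+\varepsilon\omega$ are positive, their classes converge to $\gamma$, and each class is pseudoeffective, so by closedness $\gamma\in{\mathcal E}^1_{X,BC}$. For (ii), each ${\mathcal N}^p_{X,\#}$ is an intersection over $\varepsilon>0$ of the closures of the sets of classes representable by a form $\geq-\varepsilon\omega^p$; one checks that ``being representable by a form bounded below by $-\varepsilon\omega^p$'' is a closed condition in cohomology (using that the space of such forms modulo exact forms is closed), and an intersection of closed sets is closed. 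Equivalently, and more cleanly, one can observe that ${\mathcal N}^p_{X,\#}$ is the closure of the cone generated by classes of positive smooth $(p,p)$-forms and of all pseudoeffective-type perturbations, hence closed by construction; I would pick whichever formulation is cleanest in the paper's conventions.

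For (iii), the point is Lamari's positivity criterion, already stated in the introduction as $({\mathcal E}^1_{X,BC})^*=\overline{\GG}_X$. By the Bott--Chern/Aeppli duality pairing, $\GG_X\subseteq H^{n-1,n-1}_A(X,\RR)$ sits in the dual space of $H^{1,1}_{BC}(X,\RR)$, and a class $\{\Omega\}$ lies in $\overline{\GG}_X$ iff $\int_X\alpha\wedge\Omega\geq 0$ for every pseudoeffective $[\alpha]\in{\mathcal E}^1_{X,BC}$; this is exactly the defining condition for $\{\Omega\}\in{\mathcal N}^{n-1}_{X,A}$ once one unwinds that $\alpha_\varepsilon\geq-\varepsilon\omega^{n-1}$ pairs nonnegatively in the limit against positive $(1,1)$-currents. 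So (iii) is essentially a restatement of Lamari's theorem through the duality, and the main content is bookkeeping with the pairing and the regularization of currents.

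For (iv), assume $X$ balanced, so $\BB_X\neq\emptyset$ and contains a class $[\omega_0^{n-1}]$. The inclusion $\overline{\BB}_X\subseteq{\mathcal N}^{n-1}_{X,BC}$ is immediate since a balanced class is represented by a positive $d$-closed form. For the reverse inclusion one uses that if $\gamma\in{\mathcal N}^{n-1}_{X,BC}$ then $\gamma+\varepsilon[\omega_0^{n-1}]$ is represented, after adding $i\partial\bar\partial$ of something, by a strictly positive $d$-closed $(n-1,n-1)$-form — here the nonemptiness of $\BB_X$ is what lets one absorb the error term $-\varepsilon\omega^{n-1}$ — hence $\gamma+\varepsilon[\omega_0^{n-1}]\in\BB_X$, and letting $\varepsilon\to 0$ gives $\gamma\in\overline{\BB}_X$. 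Finally (v): on a balanced manifold the pairing lets one identify ${\mathcal E}^1_{X,A}$ with a subset of the dual of the balanced cone, and closedness of ${\mathcal E}^1_{X,A}$ follows either from the same weak-compactness argument for positive currents applied in Aeppli classes, or by dualizing (iv) — a class in $H^{1,1}_A$ is Aeppli-pseudoeffective iff it pairs nonnegatively against $\overline{\BB}_X=\mathcal N^{n-1}_{X,BC}$, which is a closed condition. The main obstacle throughout is (iii)/(iv): getting the regularization and the $\partial\bar\partial$-manipulations exactly right so that ``nef'' genuinely matches ``closure of the metric cone,'' and making sure the hypothesis that $X$ is balanced is invoked precisely where it is needed (in (iv) and (v)) and nowhere else.
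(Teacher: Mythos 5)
Your overall architecture (weak compactness of positive currents with cohomologically controlled mass for the pseudoeffective cones, perturbation of representatives for closedness of the nef cones, absorbing the $-\varepsilon\omega^{n-1}$ error into a Gauduchon, resp.\ balanced, form for (iii) and (iv), and Lamari's criterion for the duality) is the standard one; the paper itself gives no argument and simply cites Lemmas 2.2, 2.3 and 2.5 of \cite{crs}, which proceed along these lines, and you do invoke the balanced hypothesis exactly where it is needed. However, two of your steps are genuinely flawed. First, in (i) your proof that ${\mathcal N}^1_{X,BC}\subseteq{\mathcal E}^1_{X,BC}$ fails as written: $\omega$ is an arbitrary Hermitian metric on a general compact complex manifold, so $\alpha_\varepsilon+\varepsilon\omega$ is positive but not $d$-closed; it represents no Bott--Chern class at all, and ``their classes converge to $\gamma$'' is meaningless (there is no K\"ahler class to substitute). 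The correct route is through Lamari: for any Gauduchon metric $\omega_G$ one has $\int_X\alpha_\varepsilon\wedge\omega_G^{n-1}\geq-\varepsilon\int_X\omega\wedge\omega_G^{n-1}$, the left-hand side depends only on the classes $\gamma$ and $\{\omega_G^{n-1}\}$, so letting $\varepsilon\to0$ the class $\gamma$ pairs nonnegatively with every Gauduchon class and Lamari's criterion produces a positive closed current in $\gamma$.

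Second, in (v) your alternative ``by dualizing (iv)'' is circular: the characterization ``a class in $H^{1,1}_A$ is pseudoeffective iff it pairs nonnegatively against $\overline{\mathcal B}_X$'' is, by the bipolar theorem, \emph{equivalent} to the closedness of ${\mathcal E}^1_{X,A}$ (it is the content of Theorem \ref{duality} iv)), so it cannot be used to prove (v). What works is your first alternative, but its key point must be made explicit: for positive $\partial\bar\partial$-closed currents $T_k$ with $\{T_k\}\to\gamma$, the mass bound comes from $\int_X T_k\wedge\omega_0^{n-1}$ where $\omega_0^{n-1}$ is a $d$-closed balanced form, since only then is the integral determined by the Aeppli class of $T_k$; pairing against a merely Gauduchon form is not class-determined, and this is precisely where balancedness enters. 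Two smaller gaps of the same kind: in (ii) you assert that ``representable by a form $\geq-\varepsilon\omega^p$'' is a closed condition, whereas the clean argument perturbs representatives (write $\gamma=\gamma_k+\delta_k$ with smooth $\delta_k\to0$ in $C^0$, using finite dimensionality, and add $\delta_k$ to a form $\geq-\tfrac{\varepsilon}{2}\omega^p$ in $\gamma_k$); and in (iii) the pairing condition is \emph{not} ``exactly the defining condition'' of ${\mathcal N}^{n-1}_{X,A}$ --- the nontrivial direction is recovered only via the sandwich $\overline\GG_X\subseteq{\mathcal N}^{n-1}_{X,A}\subseteq({\mathcal E}^1_{X,BC})^*=\overline\GG_X$, which you should state.
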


\begin{proof}
For complete proofs we refer the interested reader to Lemmas 2.2, 2.3 
and 2.5 in \cite{crs}.
\end{proof}

We will often use the following result \cite[Theorem 2.4]{crs}  
(see also \cite[Remark 3.3]{fuxiao}),
which we state for the  convenience of the reader:

\begin{thm}
\label{duality}
Let $X$ be a compact complex manifold of dimension $n$. Then
\begin{itemize}

\item[i)]
${\mathcal N}_{X,BC}^1=({\mathcal E}_{X,A}^{n-1})^*,$

\item[ii)]
${\mathcal N}_{X,A}^{n-1}=({\mathcal E}_{X,BC}^1)^*.$
\end{itemize}
Moreover, if $X$ is  balanced, then
\begin{itemize}
\item[iii)]
${\mathcal N}_{X,A}^1=({\mathcal E}_{X,BC}^{n-1})^*,$

\item[iv)]
${\mathcal N}_{X,BC}^{n-1}=({\mathcal E}_{X,A}^1)^*.$
 
 \end{itemize}
 \end{thm}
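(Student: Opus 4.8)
The plan is to read each of the four identities as a polarity, under the perfect duality pairing
\begin{equation*}
H^{p,p}_{BC}(X,\RR)\times H^{n-p,n-p}_A(X,\RR)\to\RR,\qquad ([\alpha],\{\beta\})\mapsto\int_X\alpha\wedge\beta,
\end{equation*}
between a nef cone $\mathcal N$ and a pseudoeffective cone $\mathcal E$ lying in dual cohomology spaces ($p=1$ for i), ii) and $p=n-1$ for iii), iv)). Since these spaces are finite dimensional and, by the Proposition, the cones involved are closed convex cones, the bipolar theorem applies: each claimed equality $\mathcal N=\mathcal E^{*}$ splits into $\mathcal N\subseteq\mathcal E^{*}$ and $\mathcal E^{*}\subseteq\mathcal N$, and the elementary equivalence between $\mathcal E^{*}\subseteq\mathcal N$ and $\mathcal N^{*}\subseteq\mathcal E$ lets me replace the latter by whichever is more transparent in a given bidegree.

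First I would dispatch the inclusion $\mathcal N\subseteq\mathcal E^{*}$, common to all four cases and purely formal. Given a nef class $\gamma$, choose for every $\varepsilon>0$ a smooth representative $\alpha_\varepsilon$ with $\alpha_\varepsilon\geq-\varepsilon\omega^{p}$, and given a pseudoeffective class $\delta$ choose a positive current $T\geq0$ representing it. The value $\int_X\alpha_\varepsilon\wedge T$ depends only on the two classes, and $\int_X\alpha_\varepsilon\wedge T\geq-\varepsilon\int_X\omega^{p}\wedge T\to0$ as $\varepsilon\to0$; hence the pairing of $\gamma$ with $\delta$ is nonnegative, so $\gamma\in\mathcal E^{*}$.

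The substance is the reverse inclusion $\mathcal E^{*}\subseteq\mathcal N$. For case ii) no work is needed: Lamari's positivity criterion is precisely $(\mathcal E^{1}_{X,BC})^{*}=\overline{\mathcal G}$, and $\overline{\mathcal G}=\mathcal N^{n-1}_{X,A}$ is part iii) of the Proposition, so ii) is immediate. For i), iii) and iv) I would prove the corresponding Lamari-type duality in the relevant bidegree, and the argument has two analytic ingredients. First, a Hahn–Banach separation carried out in the space of currents shows that a class failing to lie in $\mathcal E$ is detected by a closed positive current of complementary bidegree pairing negatively against it, and dually that a class failing to be nef is detected by such a current. Second, Demailly's regularization converts a closed positive current into a family of smooth representatives with arbitrarily small negative part, which is exactly the approximation built into the definition of the nef cone. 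Combining the two yields the missing inclusion.

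The main obstacle is bookkeeping the closedness constraints: a Bott–Chern pseudoeffective class must be realized by a $d$-closed positive current while an Aeppli one only by an $i\partial\bar\partial$-closed current, and correspondingly the nef approximations live in the $d$- or the $\partial\bar\partial$-cohomology; keeping the separation and the regularization compatible with the correct cohomology in each of the four cases is the delicate part. Michelsohn's observation that a positive $(n-1,n-1)$-form is an $(n-1)$-st power is what aligns the two positivity notions across the pairing when $p=n-1$. Finally, the balanced hypothesis is essential in iii) and iv): through parts iv) and v) of the Proposition it identifies $\mathcal N^{n-1}_{X,BC}$ with $\overline{\mathcal B}_X$ and ensures that $\mathcal E^{1}_{X,A}$ is closed, and it supplies interior points of the relevant pseudoeffective cones via balanced, respectively Gauduchon, metrics, so that the Hahn–Banach separation is effective and the extracted current is genuinely positive.
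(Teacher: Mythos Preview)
The paper does not prove this theorem: immediately before the statement it writes ``We will often use the following result [crs, Theorem 2.4] (see also [fuxiao, Remark 3.3]), which we state for the convenience of the reader,'' and then moves on without argument. So there is no in-paper proof to compare your proposal against; the result is imported wholesale from the earlier paper \cite{crs}.

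That said, your outline is essentially the standard route used in \cite{crs}: the easy inclusion $\mathcal N\subseteq\mathcal E^{*}$ by pairing nef approximants against positive currents, Lamari's lemma for ii), and Hahn--Banach separation for the remaining reverse inclusions, with the balanced hypothesis providing the interior points needed to make the separation yield a genuinely positive current in iii) and iv). One remark: the appeal to Demailly's regularization is not the right ingredient here. To prove $\mathcal E^{*}\subseteq\mathcal N$ one argues by contraposition: if $\gamma$ is not nef, then for some $\varepsilon_0>0$ no smooth representative satisfies $\alpha\geq-\varepsilon_0\omega^{p}$, and Hahn--Banach applied in the space of $(p,p)$-forms separates the affine subspace of representatives of $\gamma$ from the open cone $\{\alpha>-\varepsilon_0\omega^{p}\}$, producing directly a positive current of complementary bidegree, closed in the dual sense, which pairs negatively with $\gamma$. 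No regularization of currents is involved --- the nef side already lives among smooth forms, and the Hahn--Banach functional is the positive current you want. Your sketch would be cleaner if you replaced the regularization step by this direct separation argument.
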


We conclude this section with the following  result 
which indicates that the balanced cone is a natural 
generalization on balanced manifolds of the 
movable cone whose definition is confined to the 
Fujiki class $\cal C$ manifolds. 

\begin{prop}
\label{push-b-cone}
Let $\pi:X\to Y$ be a blow-up with smooth center between two balanced compact 
complex manifolds of dimension $n$.Then $\pi_*{\mathcal B}_X={\mathcal B}_Y.$ 
\end{prop}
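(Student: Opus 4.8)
The plan is to prove the two inclusions $\pi_*\BB_X\subseteq\BB_Y$ and $\BB_Y\subseteq\pi_*\BB_X$ separately, using the characterization $\overline{\BB}={\mathcal N}^{n-1}_{\bullet,BC}$ from Proposition (iv) together with the duality ${\mathcal N}^{n-1}_{\bullet,BC}=({\mathcal E}^1_{\bullet,A})^*$ from Theorem \ref{duality}(iv), and transferring everything to the pseudoeffective side via pullback/pushforward of currents. First I would observe that pushforward of a smooth $d$-closed positive $(n-1,n-1)$-form under a modification is a $d$-closed positive $(n-1,n-1)$-\emph{current}, not a form, so the naive statement needs the cone-theoretic reformulation: one works with the closed cones and checks the statement at the level of $\overline{\BB}_X=\overline{{\mathcal N}^{n-1}_{X,BC}}$ and $\overline{\BB}_Y$, then recovers the open cones by noting that a modification is an isomorphism outside codimension-$\geq 2$ loci, so it sends interior classes to interior classes.

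For the inclusion $\pi_*\overline{\BB}_X\subseteq\overline{\BB}_Y$: given $\gamma\in\overline{\BB}_X=({\mathcal E}^1_{X,A})^*$, I want $\pi_*\gamma$ to pair non-negatively with every class in ${\mathcal E}^1_{Y,A}$. So take $\{T\}\in{\mathcal E}^1_{Y,A}$ represented by a closed positive $(1,1)$-current $T$ on $Y$; its pullback $\pi^*T$ (defined since $\pi$ is a modification and $T$ has local potentials) is a closed positive $(1,1)$-current on $X$ representing a class in ${\mathcal E}^1_{X,A}$, and the projection formula $\int_Y\pi_*\gamma\wedge T=\int_X\gamma\wedge\pi^*T\geq 0$ gives the claim. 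One must check that $\pi_*$ on Bott-Chern cohomology is well-defined and compatible with the Aeppli pairing, which is standard for proper maps.

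For the reverse inclusion $\overline{\BB}_Y\subseteq\pi_*\overline{\BB}_X$, the idea is dual: a class in $\overline{\BB}_Y=({\mathcal E}^1_{Y,A})^*$ should be hit by pushing forward something in $({\mathcal E}^1_{X,A})^*$; equivalently, after taking a complement or using surjectivity of $\pi_*$ on the relevant cohomology, one needs that $\pi^*$ sends ${\mathcal E}^1_{Y,A}$ onto a cofinal part of ${\mathcal E}^1_{X,A}$ — more precisely, that every pseudoeffective Aeppli $(1,1)$-class on $X$ is, up to adding an exceptional-divisor correction, a pullback from $Y$. This is where the blow-up-with-smooth-center hypothesis really enters: $H^{1,1}_{A}(X,\RR)\cong\pi^*H^{1,1}_A(Y,\RR)\oplus\langle\text{classes supported on the exceptional divisor}\rangle$, and I would use the structure of the exceptional divisor (a projective bundle over the center) to show the exceptional part does not obstruct positivity — concretely, that $\pi_*$ maps ${\mathcal E}^1_{X,A}$ \emph{into} ${\mathcal E}^1_{Y,A}$ and is surjective on it, so dualizing yields the inclusion $\overline{\BB}_Y=({\mathcal E}^1_{Y,A})^*\supseteq\pi_*(({\mathcal E}^1_{X,A})^*)$ once we also know $\pi^*$ is injective with the expected image.

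The main obstacle I anticipate is precisely the control of the exceptional contributions in the second inclusion: making sure that pushing forward a balanced class from $X$ lands in the \emph{open} balanced cone of $Y$ (not merely its closure) and, conversely, that every class in $\BB_Y$ admits a balanced preimage rather than just a nef-type preimage. Here one likely needs the Alessandrini–Bassanelli–type result that balancedness is a bimeromorphic invariant (so $X$ balanced $\Leftrightarrow$ $Y$ balanced, which is already part of the hypothesis) together with an explicit construction: given a balanced metric $\omega_Y$ on $Y$, build a balanced metric on $X$ whose class pushes to $[\omega_Y^{n-1}]$ by gluing $\pi^*\omega_Y$ with a small positive contribution near the exceptional divisor, controlling the correction so it does not move the cohomology class after pushforward. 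I expect the gluing/regularization step — patching a Kähler form on a neighborhood of the exceptional divisor to the pulled-back balanced form while keeping the $(n-1)$-st power $d$-closed — to be the technically delicate point, and I would handle it by working with the $(n-1,n-1)$-forms directly (using Michelsohn's $n$-th root) rather than with $(1,1)$-forms.
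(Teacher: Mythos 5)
Your overall framework for the inclusion $\pi_*\BB_X\subseteq\BB_Y$ (duality $\overline{\BB}={\mathcal N}^{n-1}_{BC}=({\mathcal E}^1_{A})^*$ and testing the pushforward class against pseudoeffective $(1,1)$-classes) is the same as the paper's, but there is a genuine gap at the key step. You represent a class of ${\mathcal E}^1_{Y,A}$ by a \emph{closed} positive $(1,1)$-current with local potentials and pull it back. That is the wrong cone: Aeppli pseudoeffective classes are represented by positive currents that are only $\partial\bar\partial$-closed; such currents need not be $d$-closed, have no local potentials, and admit no naive pullback $\pi^*T$. This is exactly the technical crux, and the paper resolves it by citing Alessandrini--Bassanelli \cite{alessandrini2}: for every positive $\partial\bar\partial$-closed $(1,1)$-current $T$ on $Y$ there exists a positive $\partial\bar\partial$-closed current $\widetilde T$ on $X$ with $\pi_*\widetilde T=T$ and $\{\widetilde T\}=\pi^*\{T\}$, whence $([\pi_*\omega^{n-1}],\{T\})=\int_X\widetilde T\wedge\omega^{n-1}\geq 0$. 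Moreover the inequality is strict whenever $\{T\}\neq 0$, which places $[\pi_*\omega^{n-1}]$ in the \emph{interior} of ${\mathcal N}^{n-1}_{Y,BC}$, i.e.\ in $\BB_Y$ itself. This strict-positivity argument is also what replaces your "a modification is an isomorphism outside codimension $\geq 2$, so it sends interior classes to interior classes" step, which as stated is not a proof: the issue is openness of the resulting class in the dual cone, not the behavior of $\pi$ away from the center.

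For the reverse inclusion $\BB_Y\subseteq\pi_*\BB_X$, your plan (glue $\pi^*\omega_Y$ with a positive correction near the exceptional divisor, keeping the $(n-1)$-st power $d$-closed and the pushed-forward class unchanged) is precisely the content of Corollary 4.9 in \cite{alessandrini2}, which the paper simply quotes. Your sketch of this direction -- the decomposition of $H^{1,1}_A(X,\RR)$, "cofinality" of pullbacks in ${\mathcal E}^1_{X,A}$, and the gluing itself -- is not carried out, and completing it would amount to reproving that nontrivial result. The efficient route is to cite it and to concentrate the actual work on the first inclusion via the lifting of $\partial\bar\partial$-closed positive currents described above.
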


\begin{proof}
The inclusion ${\mathcal B}_Y\subseteq \pi_*{\mathcal B}_X$ is just Corollary 4.9 
in \cite{alessandrini2}. Conversely, let $\omega^{n-1}$ be a balanced metric on $X$. 
From Theorem \ref{duality} iv), in order to show that the class $[\pi_*\omega^{n-1}]$ is 
balanced, it is enough to check that $([\pi_*\omega^{n-1}],\{T\})\geq 0$ where $T$ 
is an arbitrary $(1,1)$-current on $Y$ which is positive and $\partial\bar\partial$-closed. 
By  \cite{alessandrini2}, given such a current $T$,  there exists 
$\widetilde T$ a positive $(1,1)$-current on $X$ which is $\partial\bar\partial$-closed, 
such that $\pi_*\widetilde T=T$ and $\{\widetilde T\}=\pi^*\{T\}$. Then 
\begin{equation*}
([\pi_*\omega^{n-1}],\{T\})=([\omega^{n-1}],\{\widetilde T\})=\int_X\widetilde T\wedge \omega^{n-1}\geq 0
\end{equation*}
The above inequality is strict when the class $\{T\}\neq 0$, and this shows that 
$[\pi_*\omega^{n-1}]$ is in the interior of the cone ${\mathcal N}_{BC}^{n-1}$, 
which is the balanced cone.
\end{proof}

\begin{rmk}
In  \cite[Proposition 2.3]{xiao}, Xiao observed that one always has  
$\pi_*{\mathcal B}_X\subseteq{\mathcal B}_Y.$
\end{rmk}

\section{$\BB=\GG$ manifolds}

Let $X$ be a closed Hermitian manifold such that  $\iota_{n-1}(\BB_X)=\GG_X.$ 
Imposing such  condition has several implications on the complex structure of $X.$

\begin{lemma}
\label{iso-coh}
Let $X$ be a complex manifold such that $\iota_{n-1}(\BB_X)=\GG_X.$ Then 
$\iota_{n-1}$ is onto. If in addition $X$ is a SKT manifold, then 
$\iota_1$ and $\iota_{n-1}$ are isomorphisms.
\end{lemma}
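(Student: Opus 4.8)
The plan is to prove, in this order: (1) $\iota_{n-1}$ is onto; (2) assuming in addition that $X$ is SKT, $\iota_1$ is injective; (3) in the same setting, $\iota_1$ is onto. From (1)--(3) both maps are bijective linear maps, hence isomorphisms.

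For (1): by Gauduchon's theorem $\GG_X$ is nonempty, and it is open in $H^{n-1,n-1}_A(X,\RR)$. Thus the hypothesis $\iota_{n-1}(\BB_X)=\GG_X$ exhibits a nonempty open subset of $H^{n-1,n-1}_A(X,\RR)$ lying inside the linear subspace $\Image\iota_{n-1}$; as a proper linear subspace has empty interior, $\Image\iota_{n-1}=H^{n-1,n-1}_A(X,\RR)$. For (2), I would note that under the duality pairings $H^{p,q}_{BC}(X,\RR)\times H^{n-p,n-q}_A(X,\RR)\to\RR$ the maps $\iota_1$ and $\iota_{n-1}$ are mutually transpose: for $d$-closed representatives $\alpha$ and $\Omega$ of bidegrees $(1,1)$ and $(n-1,n-1)$, both pairings reduce to $\int_X\alpha\wedge\Omega$. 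Hence the surjectivity of $\iota_{n-1}$ from (1) is equivalent to the injectivity of $\iota_1$; transposing once more, (3) is equivalent to $\ker\iota_{n-1}=0$, and this is what the SKT hypothesis will be used for.

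So let $[\Omega]_{BC}\in\ker\iota_{n-1}$, represented by a real $d$-closed $(n-1,n-1)$-form $\Omega$, which is then Aeppli-exact, $\Omega=\partial a+\bar\partial\bar a$. The SKT cone is a nonempty open convex cone in $H^{1,1}_A(X,\RR)$, hence spans it, so by nondegeneracy of the pairing $H^{n-1,n-1}_{BC}(X,\RR)\times H^{1,1}_A(X,\RR)\to\RR$ it suffices to show $\int_X\Omega\wedge\omega=0$ for every SKT metric $\omega$. Fix $\omega$ and a balanced metric $\psi^{n-1}$. Since $\psi^{n-1}$ is strictly positive and $d$-closed, by Michelsohn's observation $\psi^{n-1}+t\Omega$ is a balanced metric for all $t$ in some open interval $I\ni 0$, and $\iota_{n-1}([\psi^{n-1}]_{BC}+t[\Omega]_{BC})=\{\psi^{n-1}\}_A$ is independent of $t$. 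For any strictly positive $d$-closed representative $\Psi$ of $[\psi^{n-1}]_{BC}+t[\Omega]_{BC}$ one has $\Psi=\psi^{n-1}+t\Omega+i\partial\bar\partial\phi$, and since $\partial\bar\partial\omega=0$, integrating by parts twice gives $\int_X i\partial\bar\partial\phi\wedge\omega=0$; therefore
\[
0<\int_X\Psi\wedge\omega=\int_X\psi^{n-1}\wedge\omega+t\int_X\Omega\wedge\omega\qquad(t\in I).
\]

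The key step — which I expect to be the main obstacle — is to show that in fact $I=\RR$, i.e.\ that $[\psi^{n-1}]_{BC}+t[\Omega]_{BC}$ remains a balanced class for all real $t$; equivalently, using $\iota_{n-1}(\BB_X)=\GG_X$ together with $\overline{\BB}_X=\mathcal{N}^{n-1}_{X,BC}=(\mathcal{E}^1_{X,A})^*$, that $\BB_X=\iota_{n-1}^{-1}(\GG_X)$, so that $\BB_X$ is stable under translation by $\ker\iota_{n-1}$. Granting this, the affine function of $t$ displayed above is positive on all of $\RR$, hence constant, so $\int_X\Omega\wedge\omega=0$; letting $\omega$ run over all SKT metrics gives $[\Omega]_{BC}=0$, i.e.\ $\ker\iota_{n-1}=0$, completing the proof. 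To obtain the stability statement one cannot argue formally from the surjectivity of $\iota_{n-1}$ alone; instead I would suppose $I$ bounded, consider the boundary class $[\psi^{n-1}]_{BC}+\beta[\Omega]_{BC}\in\partial\BB_X$, detect it by a nonzero positive $\partial\bar\partial$-closed $(1,1)$-current via Theorem \ref{duality}(iv), and play this current off against the openness of $\GG_X$ (via $\overline\GG_X=(\mathcal{E}^1_{X,BC})^*$, Theorem \ref{duality}(ii)) and the condition $\partial\bar\partial\omega=0$ to reach a contradiction — this is the point where the SKT hypothesis genuinely enters, and the rest of the argument is routine.
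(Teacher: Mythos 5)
Your steps (1) and (2) are fine and agree with the paper: $\GG_X$ is a nonempty open set contained in the image of the linear map $\iota_{n-1}$, so $\iota_{n-1}$ is onto, and since $\iota_1$ is the transpose of $\iota_{n-1}$ under the (nondegenerate) Bott--Chern/Aeppli pairings, $\iota_1$ is injective. The genuine gap is in step (3). Your whole argument there rests on the ``stability'' statement that $[\psi^{n-1}]_{BC}+t[\Omega]_{BC}$ remains balanced for every $t\in\RR$, i.e.\ that $\BB_X$ is invariant under translation by $\ker\iota_{n-1}$, and this is never proved: you only outline a plan (bounded interval $I$, boundary class, a supporting class $e=\{T\}_A\in\mathcal{E}^1_{X,A}\setminus\{0\}$ with $\langle e,[\psi^{n-1}]+\beta[\Omega]\rangle=0$, then ``play this off'' against the openness of $\GG_X$ and $\partial\bar\partial\omega=0$). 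Two problems. First, given your own reduction, stability is essentially equivalent to the injectivity of $\iota_{n-1}$ you are trying to prove (injectivity makes it vacuous, and you show it implies injectivity), so you have reduced the lemma to a statement of the same strength. Second, the sketched contradiction does not close with the tools you name: the supporting current $T$ is only $\partial\bar\partial$-closed, so the functional $\langle\{T\}_A,\cdot\rangle$ on $H^{n-1,n-1}_{BC}(X,\RR)$ has no reason to annihilate $\ker\iota_{n-1}$ --- that is exactly the point at issue --- hence you cannot pass from $\langle e,[\psi^{n-1}]+\beta[\Omega]\rangle=0$ to the would-be contradiction $\int_X T\wedge\psi^{n-1}>0$; the openness of $\GG_X$ only yields balanced lifts of nearby Gauduchon classes, i.e.\ lifts determined modulo $\ker\iota_{n-1}$, which the functional $\langle e,\cdot\rangle$ cannot see through; and the condition $\partial\bar\partial\omega=0$ concerns the fixed interior class $\{\omega\}_A$, which is unrelated to $e$. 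So the ``key step'' is not a technical obstacle left to the reader; it is the entire content of the second assertion, and it is missing.

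For comparison, the paper avoids proving injectivity of $\iota_{n-1}$ head-on and never needs your stability statement. From $\iota_{n-1}(\BB_X)=\GG_X$ it first deduces that $X$ is balanced (so $\mathcal{E}^1_{X,A}$ is closed), passes to closures to get $\iota_{n-1}(\mathcal{N}^{n-1}_{X,BC})=\mathcal{N}^{n-1}_{X,A}$, and dualizes via Theorem \ref{duality} to obtain $\iota_1(\mathcal{E}^1_{X,BC})=\mathcal{E}^1_{X,A}$ together with the injectivity of $\iota_1$. The SKT hypothesis then enters only through the remark that an SKT metric represents a class in the interior of $\mathcal{E}^1_{X,A}$; hence $\Image\iota_1$ contains a nonempty open set and $\iota_1$ is onto, so $\iota_1$ is an isomorphism and, by transposition (or dimension count plus the surjectivity from step (1)), so is $\iota_{n-1}$. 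If you want to salvage your route, the statement you actually need is $\iota_1(\mathcal{E}^1_{X,BC})=\mathcal{E}^1_{X,A}$, equivalently that every class of $\mathcal{E}^1_{X,A}$ annihilates $\ker\iota_{n-1}$: this is what makes $\langle e,[\Omega]\rangle=0$ and forces $I=\RR$. That is where the real work lies, and it is supplied by the cone duality argument, not by the boundary-current contradiction you outline.
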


\begin{proof}
Since ${\mathcal G}_X$ is open and non-empty, we see that $X$ is balanced and 
that $\iota_{n-1}$ is surjective. In particular, since $X$ is balanced, it follows that 
${\mathcal E}^1_A$ is closed. Since the cones ${\mathcal N}^{n-1}_{BC}$ and 
${\mathcal N}^{n-1}_A$ are the closures of the cones ${\mathcal B}_X$ and 
${\mathcal G}_X$ respectively, we get that 
\begin{equation}
\label{nefs-equality}
\iota_{n-1}({\mathcal N}_{BC}^{n-1})={\mathcal N}_A^{n-1}.
\end{equation} 
Dualizing (\ref{nefs-equality}), from Theorem \ref{duality} we obtain that 
$\iota_1({\mathcal E}_{BC}^1)={\mathcal E}_A^1$ and that $\iota_1$ is injective. 
Since $X$ is $SKT$, it follows that the interior of ${\mathcal E}_A^1$ is non-empty, 
therefore $\iota_1$ is also onto, hence an isomorphism. Therefore, $\iota_{n-1}$ is also an 
isomorphism.
\end{proof}

\subsection{Twistor spaces and counter-examples to the Fu-Xiao conjecture.} 
One can interpret Lemma \ref{iso-coh} as an obstruction  to the equality 
of the balanced and Gauduchon cones. We adopt this point of view and  
disprove  next Conjecture A.4 in \cite{fuxiao}. The counter-examples we 
propose are certain twistor spaces.  

\medskip

\subsubsection{Twistor spaces} Let $(M,g)$ be an oriented Riemannian $4-$manifold. 
The rank-6 vector bundle bundle of $2$-forms $\Lambda^2$ on $M$ 
decomposes as the direct sum of two rank-$3$ vector bundles
$$ \displaystyle 
\Lambda^2T^*M=\Lambda_+\oplus \Lambda_-
$$ 
By definition, $\Lambda^{\pm}$ are the eigenspaces of  the Hodge $\star$-operator
$$
\star:\Lambda^2T^*M\to \Lambda^2 T^*M,
$$  
corresponding to the $(\pm 1)-$ eigenvalues of $\star.$ 
The sections of $\Lambda^+$ are called  self-dual $2$-forms, whereas the 
sections of $\Lambda^-$ are the  anti-self-dual $2$-forms.

The Riemannian curvature tensor can be thought of as 
an operator
$$
\cal R: \Lambda^2 T^*M\rightarrow \Lambda^2 T^*M,
$$ 
called be the Riemannian
curvature operator. The Riemannian curvature operator 
decomposes under the action of $SO(4)$ as 
$$
\cal R=\frac{s}6Id+W^-+W^++\stackrel{\circ}{r},
$$ 
where  $W^{\pm}$ are trace-free 
endomorphisms of $\Lambda^{\pm},$ and they are called 
the self-dual and anti-self-dual components of the Weyl curvature 
operator. The  scalar curvature  $s$ acts by scalar multiplication 
and $\stackrel{\circ}{r}$ is the trace-free Ricci curvature operator.  

\begin{defn} 
An oriented Riemannian 
$4-$manifold $(M, g)$ is said to be anti-self-dual (ASD) 
if $W^+=0.$ 
\end{defn}

\begin{rmk} This definition is conformally invariant \cite{ahs}, i.e. 
if $(M,g)$ is ASD, so is $(M, ag)$  for any smooth positive 
function $a$. 
\end{rmk}

A plethora of anti-self-dual $4-$manifolds is rendered by a result of Taubes 
\cite{taubes} asserting that for any smooth, compact, oriented, 
$4$-dimensional manifold $X$, the connect sum $M=X\#k\overline {\CC\PP^2}$ 
of $X$ with $k$ copies of the complex projective $2$-space equipped with 
the opposite of its complex orientation admits a metric with $W^+=0$ for $k$ 
sufficiently large. In particular, one can find ASD manifolds with arbitrarily large 
first Betti number.

\bigskip

The twistor space of a conformal Riemannian manifold 
$(M,[g])$ is the total space of the sphere 
bundle of the rank three real vector bundle of self-dual 
$2-$forms 
$$
{\cal Z}:=S(\Lambda_+)\subset \Lambda^+.
$$ 
Atiyah, Hitchin, and Singer \cite{ahs} show that that 
$\cal Z$ comes naturally equipped with an almost complex structure, which is 
integrable if and only if $W^+ = 0.$ 

\medskip

In \cite{michelsohn}, Michelsohn states that the twistor space of a closed 
ASD manifold always carries a balanced metric, a result proved in \cite{muskarov} 
(see also \cite[Sect. 4]{crs}).

\bigskip

\subsubsection{Proof of Theorem  \ref{main-bad-twistors}} Let $\cal Z$ 
be the twistor space of a closed  anti-self-dual manifold $M$ 
of real dimension four. If ${\mathcal B}_{\cal Z}={\mathcal G}_{\cal Z}$ 
by Lemma \ref{iso-coh} 
\begin{equation*}
j_2:H^{2,2}_{BC}({\cal Z},{\mathbb C})\to H^{2,2}_A({\cal Z},{\mathbb C})
\end{equation*}
is surjective. Hence the natural morphism 
\begin{equation*}
\bar\partial:H_A^{2,2}({\cal Z},{\mathbb C})\to H_{BC}^{2,3}({\cal Z},{\mathbb C})
\end{equation*}
is zero. By duality, we obtain that 
 \begin{equation*}
 \bar\partial:H_A^{1,0}({\cal Z},{\mathbb C})\to H^{1,1}_{BC}({\cal Z},{\mathbb C})
 \end{equation*}
 is zero, which in turn implies that the natural morphism
 \begin{equation*}
 H^{1,0}_{\bar\partial}({\cal Z},{\mathbb C})\to H^{1,0}_A({\cal Z},{\mathbb C})
 \end{equation*}
 is surjective. Here $H^{\bullet,\bullet}_{\bar\partial}$ denotes the usual Dolbeault 
 cohomology. From \cite{singer} we know that 
 $H^{1,0}_{\bar\partial}(\cal Z,{\mathbb C})=0$, hence 
 $H^{1,0}_A(\cal Z,{\mathbb C})=H^{0,1}_A(\cal Z,{\mathbb C})=0$. 
 On the other hand, the morphism
 \begin{equation*}
 H^{0,1}_{\bar\partial}(\cal Z,{\mathbb C})\to H^{0,1}_A(\cal Z,{\mathbb C})
 \end{equation*}
 is always injective, therefore $H^{0,1}_{\bar\partial}(\cal Z,{\mathbb C})=0$. 
But, from Corollary 3.2 in \cite{singer}, it follows that 
$$
\dim H^{0,1}_{\bar\partial}(\cal Z,{\mathbb C})=\dim H_{dR}^1(M,{\mathbb C})
$$ 
where $H_{dR}^{\bullet}$ denotes the de Rham cohomology. 

Summing up, on a twistor space on which ${\mathcal B}_{\cal Z}={\mathcal G}_{\cal Z}$ 
one has $H^1_{dR}(M,{\mathbb C})=0.$ Therefore on the twistor spaces 
$X$ over the anti-self-dual $4$-folds $M$ with $H^1_{dR}(M,{\mathbb C})\neq 0$ 
the balanced cone cannot be equal to the Gauduchon cone. The existence of such 
anti-self-dual manifolds is ensured by the aforementioned theorem of Taubes \cite{taubes}.
\qed

\subsection{The balanced and Gauduchon cones on Moishezon manifolds}
The result of the previous section indicates that a generalization of Conjecture 2.3 in 
\cite{bdpp} to balanced manifolds fails. For projective manifolds,  the recent work of 
Witt Nystr\"om \cite{wittnystrom} implies  that  $\BB=\GG.$ We extend next Witt Nystr\"om's 
result to Moishezon manifolds.

\begin{prop}
\label{up-down}
If $\pi:X\to Y$ is a blow-up with smooth center in $Y$ and if 
$\iota_{n-1}({\mathcal B}_X)={\mathcal G}_X$, then $\iota_{n-1}({\mathcal B}_Y)={\mathcal G}_Y$.
\end{prop}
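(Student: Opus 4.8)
The plan is to propagate the equality of cones from the blow-up $X$ down to the base $Y$ by combining Proposition \ref{push-b-cone} with a dual statement for the Gauduchon cone. First I would note that since $\iota_{n-1}({\mathcal B}_X) = {\mathcal G}_X$, Lemma \ref{iso-coh} gives that $X$ is balanced and $\iota_{n-1}$ is surjective for $X$; moreover $Y$ is then balanced as well, since the class of balanced manifolds is closed under bimeromorphisms (Alessandrini--Bassanelli), so both $\pi_*\mathcal B_X = \mathcal B_Y$ (Proposition \ref{push-b-cone}) and the relevant duality statements of Theorem \ref{duality} are available on $Y$. We already know $\iota_{n-1}(\mathcal B_Y) \subseteq \mathcal G_Y$, so the content is the reverse inclusion.

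Next I would set up the argument at the level of closures, passing between Bott--Chern and Aeppli via the pairing. Take a Gauduchon class $\{\Omega\} \in \mathcal G_Y = $ int $\mathcal N^{n-1}_{Y,A}$; I want to realize it, or at least its closure counterpart, as $\iota_{n-1}$ of a balanced class. The natural move is to pull back: $\pi^*\{\Omega\}$ should lie in $\overline{\mathcal G}_X = \mathcal N^{n-1}_{X,A}$, hence by the hypothesis on $X$ it equals $\iota_{n-1}[\Theta]$ for some $[\Theta] \in \overline{\mathcal B}_X = \mathcal N^{n-1}_{X,BC}$. Then push forward: $\pi_*[\Theta] \in \pi_*\overline{\mathcal B}_X = \overline{\mathcal B}_Y$ by Proposition \ref{push-b-cone}. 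The remaining point is the compatibility $\iota_{n-1}(\pi_*[\Theta]) = \{\Omega\}$ in $H^{n-1,n-1}_A(Y,\mathbb R)$, which should follow from the projection formula for the $BC$--$A$ pairing: for any $\partial\bar\partial$-closed $(1,1)$-current $T$ on $Y$ one has
\begin{equation*}
(\iota_{n-1}(\pi_*[\Theta]), \{T\}) = ([\Theta], \pi^*\{T\}) = (\iota_{n-1}[\Theta], \pi^*\{T\}) = (\pi^*\{\Omega\}, \pi^*\{T\}) = (\{\Omega\}, \{T\}),
\end{equation*}
together with the fact that the pairing $H^{n-1,n-1}_A(Y,\mathbb R) \times H^{1,1}_{BC}(Y,\mathbb R) \to \mathbb R$ is nondegenerate and that the map $\pi^*$ on $H^{1,1}_{BC}$ is the relevant piece. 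This identifies $\{\Omega\}$ with a class in $\iota_{n-1}(\overline{\mathcal B}_Y)$; to upgrade to the open cones I would work directly with interior points, arguing that if $\{\Omega\}$ is strictly positive (a genuine Gauduchon metric) then the preimage class can be taken in the open cone $\mathcal B_X$, using the openness statements in the Proposition of Section 2 and the fact established in the proof of Proposition \ref{push-b-cone} that $\pi_*$ of an interior class is interior.

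The step I expect to be the main obstacle is the pullback direction $\pi^*: \mathcal G_Y \to \overline{\mathcal G}_X$ together with the precise bookkeeping that $\pi^*\{\Omega\}$ lands in $\iota_{n-1}(\overline{\mathcal B}_X)$ \emph{and} that pushing down recovers exactly the Aeppli class $\{\Omega\}$ rather than a class differing by something in the kernel of $\iota_{n-1}$ on $Y$. Controlling this requires knowing that $\pi^*$ is injective on $H^{1,1}_{BC}(Y,\mathbb R)$ (so the displayed chain of equalities, valid against all test currents $T$, actually pins down $\{\Omega\}$) and that blow-ups with smooth center behave well with respect to both cohomology theories; the exceptional divisor contributes to the kernels/cokernels of $\iota$ and one must check these contributions do not obstruct the argument. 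I would handle this by the standard blow-up formulas for Bott--Chern and Aeppli cohomology (the exceptional divisor's classes are $\partial\bar\partial$-exact after push-forward), so that $\pi_* \pi^* = \mathrm{id}$ at the level of the relevant groups. Once that is in place the inclusion $\mathcal G_Y \subseteq \iota_{n-1}(\mathcal B_Y)$ follows, and combined with the reverse inclusion we conclude $\iota_{n-1}(\mathcal B_Y) = \mathcal G_Y$.
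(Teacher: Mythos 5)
Your route is genuinely different from the paper's. The paper dualizes: it observes (via Theorem \ref{duality}, i.e.\ \cite{crs}) that the hypothesis $\iota_{n-1}(\mathcal B_X)=\mathcal G_X$ is equivalent to the statement that every Aeppli-pseudoeffective $(1,1)$-class on $X$ contains a $d$-closed positive current, then takes a $\partial\bar\partial$-closed positive current $T$ on $Y$, forms its Alessandrini--Bassanelli total transform $\pi^*T$, replaces it by a $d$-closed positive current $R$ in the same Aeppli class on $X$, and pushes $R$ down to conclude the same property on $Y$. You instead stay at the level of the $(n-1,n-1)$ cones: pull back a Gauduchon class, use the closed-cone form of the hypothesis (equation (3.1) in Lemma \ref{iso-coh}), push forward with Proposition \ref{push-b-cone}, and identify $\iota_{n-1}(\pi_*[\Theta])=\{\Omega\}$ by the projection formula and nondegeneracy of the pairing. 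That core is sound, with a small bookkeeping slip: since the dual of $H^{n-1,n-1}_A(Y,\mathbb R)$ is $H^{1,1}_{BC}(Y,\mathbb R)$, the test object must be a $d$-closed $(1,1)$-form/current and its Bott--Chern class, not a $\partial\bar\partial$-closed one; also, injectivity of $\pi^*$ on $H^{1,1}_{BC}$ is not needed, only nondegeneracy of the pairing on $Y$ and $\int_X\pi^*\Omega\wedge\pi^*T=\int_Y\Omega\wedge T$.

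The genuine gap is your final ``upgrade to open cones.'' The claim that for a strictly positive $\{\Omega\}$ the preimage can be taken in the open cone $\mathcal B_X$ is false: for a nontrivial blow-up the class $\pi^*\{\Omega\}$ always lies on the boundary of $\mathcal G_X$. Indeed, pairing with the Bott--Chern class $[E]$ of the exceptional divisor (a nonzero pseudoeffective class) gives $\int_E\pi^*\Omega=0$, because the restriction of $\pi^*\Omega$ to $E$ is pulled back from the center, which has dimension at most $n-2$; whereas any class in $\mathcal G_X$ pairs strictly positively with $[E]$. So under the hypothesis $\iota_{n-1}(\mathcal B_X)=\mathcal G_X$ no preimage of $\pi^*\{\Omega\}$ can lie in $\mathcal B_X$; the class $[\Theta]$ is only nef, and the interiority fact from the proof of Proposition \ref{push-b-cone} does not apply to it. The step is repairable by a soft convexity argument rather than by choosing an interior preimage: your computation shows $\mathcal G_Y\subseteq\iota_{n-1}(\mathcal N^{n-1}_{Y,BC})\subseteq\overline{\iota_{n-1}(\mathcal B_Y)}$; since $Y$ is balanced, $\mathcal B_Y\neq\emptyset$, and since the image of $\iota_{n-1}$ on $Y$ contains the nonempty open set $\mathcal G_Y$, the map $\iota_{n-1}$ is surjective, hence open, so $A=\iota_{n-1}(\mathcal B_Y)$ is a nonempty open convex cone; an open set contained in $\overline A$ lies in $\mathrm{int}\,\overline A=A$, giving $\mathcal G_Y\subseteq\iota_{n-1}(\mathcal B_Y)$, which together with the trivial reverse inclusion completes your argument. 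With that correction your proof works, and it is an acceptable alternative to the paper's current-theoretic one, at the cost of invoking Proposition \ref{push-b-cone} (which the paper's proof does not need).
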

\begin{proof}
Since the Gauduchon cone on $X$ is never empty, it follows that the balanced cone on 
$X$ is non-empty, hence $X$ is balanced. Therefore $Y$ is balanced \cite{alessandrini1}. 
Consequently, ${\mathcal E}_{A,X}^1$ and ${\mathcal E}_{A,Y}^1$ are closed 
and the equality $\iota_{n-1}({\mathcal B}_X)={\mathcal G}_X$ is equivalent to 
${\mathcal E}_{A,X}^1={\mathcal E}_{BC, X}^1$ (\cite{crs} Theorem 2.4). So let 
$T$ be a positive current, $i\partial\bar\partial T=0$ on $Y$. 
Let $\pi^*T$ be its total transform on $X$ as defined in \cite{alessandrini2}. Since
${\mathcal E}_{A,X}^1={\mathcal E}_{BC,X}^1$, the class 
$\{\pi^*T\}\in H^{1,1}_A(X,{\mathbb R})$ contains a $d$-closed positive $(1,1)$-current $R$. 
Therefore the class of $T=\pi_*\pi^*T$ contains $\pi_*R$, a $d$-closed positive current. 
\end{proof}

As a consequence of Proposition \ref{up-down}, we have:

\begin{proof}[Proof of Theorem \ref{B=GMoishezon}] If $Y$ is projective,  
from \cite{wittnystrom}, as in the proof of  Proposition 2.10 in \cite{crs}
we have ${\mathcal B}_Y={\mathcal G}_Y.$
In general, a Moishezon manifold 
can be made projective by a sequence of blow-ups with smooth centers.
For each blow-up in the sequence we can apply Proposition \ref{up-down} and the 
conclusion follows.
\end{proof}

\begin{rmk}
 An interesting question is whether the condition ${\iota_{n-1}(\mathcal B})={\mathcal G}$ 
is a bimeromorphic invariant, i.e., given $X$ and $Y$ two bimeromorphic compact complex 
manifolds, is it true that $\iota_{n-1}({\mathcal B}_X)={\mathcal G}_X$ if and only if 
$\iota_{n-1}({\mathcal B}_Y)={\mathcal G}_Y?$ 
\end{rmk}

We conclude this section with the following observation which serves as an 
introduction to the next section.

\begin{prop}
Let $X$ be a compact complex surface. Then $\iota_{n-1}({\mathcal B}_X)={\mathcal G}_X$ 
if and only if $X$ is K\"ahler.
\end{prop}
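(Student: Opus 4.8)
The plan is to derive the equality $\iota_{n-1}(\BB_X)=\GG_X$ from the hypothesis for a compact complex surface $X$ (here $n=2$, so $\iota_{n-1}=\iota_1$), and then invoke classical surface theory. First I would observe that if $X$ is K\"ahler, then the $\partial\bar\partial$-lemma holds, so $\iota_1$ is an isomorphism and it carries the balanced cone (which for K\"ahler surfaces is the K\"ahler cone, since $d\omega=0\iff d\omega^{n-1}=0$ when $n=2$) onto the Gauduchon cone; more simply, on a K\"ahler surface every Gauduchon metric is cohomologous in $H^{1,1}_A$ to a K\"ahler one. This gives the easy direction $\Leftarrow$.

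For the converse, suppose $\iota_1(\BB_X)=\GG_X$. Since $\GG_X$ is always nonempty and open, $X$ is balanced; but in complex dimension two a balanced metric satisfies $d\omega=0$, i.e.\ $X$ is K\"ahler. Indeed $\omega^{n-1}=\omega$ when $n=2$, so "balanced" literally means $d\omega=0$. Thus the hypothesis forces $X$ to carry a balanced = K\"ahler metric, and we are done. The key structural input is simply that in dimension two the balanced condition degenerates to the K\"ahler condition, combined with the fact established earlier (via Lemma \ref{iso-coh}, or directly from $\GG_X$ being open and nonempty) that $\iota_{n-1}(\BB_X)=\GG_X$ implies $\BB_X\neq\varnothing$, hence $X$ balanced.

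I expect essentially no obstacle here: this proposition is a sanity check / motivating special case rather than a substantive result, and it follows immediately from unwinding the definitions in the case $n=2$. The only point that requires a line of justification is that $\GG_X$ being nonempty (Gauduchon's theorem) together with the assumed equality forces the existence of a balanced metric, which then is automatically K\"ahler because $\omega^{n-1}=\omega$. If one wanted a slightly different route avoiding the cone equality, one could note that the hypothesis in particular says $\iota_1$ is onto, but the genuinely needed content is just nonemptiness of $\BB_X$.

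In short: the proof is $\BB_X\neq\varnothing\iff X$ balanced $\iff X$ K\"ahler (in dimension two), where the first equivalence uses that the hypothesis plus Gauduchon's nonemptiness of $\GG_X$ forces $\BB_X\neq\varnothing$, and conversely if $X$ is K\"ahler the $\partial\bar\partial$-lemma makes $\iota_1$ an isomorphism matching the two cones. I would write this up in three or four sentences, citing Gauduchon \cite{gauduchon} for nonemptiness of $\GG_X$ and the elementary identity $\omega^{n-1}=\omega$ for $n=2$.
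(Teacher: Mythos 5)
Your ``only if'' direction is exactly the paper's argument: $\GG_X\neq\emptyset$ by Gauduchon, so the assumed equality forces $\BB_X\neq\emptyset$, and on a surface $\omega^{n-1}=\omega$, so a balanced metric is K\"ahler. No issue there.

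The gap is in the ``if'' direction. You justify $\iota_1(\BB_X)=\GG_X$ for a K\"ahler surface by saying that the $\partial\bar\partial$-lemma makes $\iota_1$ an isomorphism ``carrying the balanced cone onto the Gauduchon cone,'' or ``more simply'' that every Gauduchon class on a K\"ahler surface contains a K\"ahler form. The first claim does not follow: the $\partial\bar\partial$-lemma only gives the trivial inclusion $\iota_1(\BB_X)\subseteq\GG_X$; the reverse inclusion for $\partial\bar\partial$-manifolds is precisely Popovici's Conjecture \ref{pop-conj}, which is open, so this reasoning proves too much. The second formulation is the true but nontrivial content, and you give no argument for it. What is actually needed is the surface-specific statement that on a compact K\"ahler surface $\BB_X=\GG_X$, which the paper obtains by citing Proposition 2.7 of \cite{crs}; that result rests on Lamari's duality $\overline\GG_X=({\mathcal E}^1_{X,BC})^*$ together with the classical fact (Buchdahl--Lamari) that on a compact K\"ahler surface the dual of the pseudoeffective cone is the nef cone, i.e.\ the closure of the K\"ahler cone, which here coincides with $\overline\BB_X$; equality of the closed cones then gives equality of their interiors, i.e.\ of the open cones $\BB_X$ and $\GG_X$. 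So either cite such a result or supply this duality argument; as written, the K\"ahler $\Rightarrow$ equality half is asserted rather than proved.
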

\begin{proof}
If $\iota_{n-1}({\mathcal B}_X)={\mathcal G}_X$, since $\GG_X\neq \emptyset$ 
then ${\mathcal B}_X\neq \emptyset$, therefore $X$ is balanced.
Therefore $X$ is K\"ahler since a balanced metric on a surface is K\"ahler. Conversely, 
if $X$ is K\"ahler,  from Proposition 2.7 in \cite{crs} it follows that 
${\mathcal B}_X={\mathcal G}_X$.
\end{proof}

\section{$\BB=\GG$ on $SKT$ threefolds}

According to Popovici's Conjecture \ref{pop-conj},  every complex manifold on which  
the $\partial\bar\partial$-lemma holds, has the property $\BB=\GG.$ In particular, since 
the Gauduchon cone $\GG$ is open and non-empty, the manifold is balanced. 
While Conjecture \ref{pop-conj} is still open, there are known examples on which 
$\BB=\GG,$ and it's natural to consider the class of manifolds which satisfies that 
condition $\iota_{n-1}(\BB)=\GG.$ We address here Conjecture \ref{fv-conj} of 
Fino and Vezzoni on this class of balanced manifolds.

\smallskip

\begin{proof}[Proof of Theorem \ref{B+G=K}] Let $\eta$ be a $SKT$ metric on $X.$

\smallskip

{\it Step 1.} By Lemma \ref{iso-coh},  $\iota_1$ is an isomorphism.  
Hence, there exists a $(1,0)$ form $\alpha$ on $X$ such that 
$\gamma=\bar\partial\alpha+\eta+\partial\bar\alpha$ is a $d$-closed $(1,1)$ form. 
From $\iota_1({\mathcal E}^1_{BC})={\mathcal E}_A^1$ it follows that  
the class of 
$\gamma$ in $H^{1,1}_{BC}(X,{\mathbb R})$ is in ${\mathcal E}_{BC}^1.$ This means that 
there exists a $d$-closed positive $(1,1)$-current $T$ such that $[\gamma]=[T]$ in 
 $H^{1,1}_{BC}(X,{\mathbb R}).$

\medskip

{\it Step 2.} We will show that $[\gamma]$ is also in ${\mathcal N}_{BC}^1$, i.e., that it is nef. 
If the irreducible components of $\cup_{c>0}E_c(T)$ are all smooth, then we can use 
Th\'eor\`eme 2 in \cite{paun}. We have already checked that $[\gamma]$ 
is pseudoeffective, and let $Z$ be an irreducible analytic subset of $\cup_{c>0}E_c(T)$. 
If $Z$ is a curve, then 
$\int_Z\gamma=\int_Z\eta\geq 0$, hence $[\gamma]$ is nef on $Z$. If $Z$ is a surface, 
then (see Lemma 2.1 in \cite{chiose1}) 
\begin{equation}
\int_Z\gamma\wedge\gamma=\int_Z\eta\wedge\eta+2\int_Z\partial\alpha\wedge\bar\partial\alpha>0 
\end{equation}
and it is well known that in this case $Z$ is a K\"ahler surface. Let $\omega$ be a 
$d$-closed positive $(1,1)$ form on $Z$. Then clearly from Stokes' theorem we 
have $\int_Z\omega\wedge\gamma=\int_Z\omega\wedge\eta>0$ and this implies 
(see Theorem 4.5 (iii) in  \cite{demaillypaun}) that $[\gamma]$ is nef on $Z$. 
This implies that $[\gamma]$ is nef on $X$.

In general, fix $g$ a Hermitian metric on $X$ and let $\varepsilon>0$ be arbitrary. 
Then, from Theorem 3.2 in \cite{demaillypaun} it follows that there exists a closed 
current in the same class as $\gamma$, denoted 
$T_{\varepsilon}=\gamma+i\partial\bar\partial\varphi_{\varepsilon}\geq -\varepsilon g$ 
and $\pi_{\varepsilon}:X^{\varepsilon}\to X$ a sequence of blow-ups with smooth centers such that 
\begin{equation}
\pi_{\varepsilon}^*T_{\varepsilon}=\sum_i\lambda_i [D_i]+\omega_{\varepsilon}
\end{equation}
 where $D_i$ are smooth surfaces in $X^{\varepsilon}, \, \lambda_i>0$ and 
 $\omega_{\varepsilon}$ is a smooth $d$-closed $(1,1)$-form on $X^{\varepsilon}$.

Suppose
\begin{equation*}
X=X_0 \stackrel{\pi_1}{\longleftarrow} X_1\stackrel{\pi_2}{\longleftarrow}\dots 
\stackrel{\pi_{N-1}}{\longleftarrow} X_{N-1}\stackrel{\pi_N}{\longleftarrow}X_N=X^{\varepsilon}
\end{equation*}
is the sequence of blow-ups $\pi_{\varepsilon}:X^{\varepsilon}\to X$ and denote by $C_j$ 
the center of the blow-up $\pi_{j+1}:X_{j+1}\to X_j$ and by $E_j$ the exceptional divisor of 
the blow-up $\pi_j:X_j\to X_{j-1}$.

Now we construct by induction Hermitian metrics $g_j$ on $X_j$ as follows: 
set $g_0=g$ on $X_0=X$ and suppose that $g_j$ has been constructed on 
$X_j$. It is well-known that one can put a metric on the line bundle 
${\mathcal O}(-[E_{j+1}])$ on $X_{j+1}$ such that its curvature, 
denoted $\beta_{j+1}$, is supported in a small neighborhood of $E_{j+1}$, 
that $\beta_{j+1}$ is positive in a smaller neighborhood of $E_{j+1}$, and that, 
for $c_{j+1}$ a small enough non-negative constant, 
$g_{j+1}=\pi_{j+1}^*g_j+c_{j+1}\beta_{j+1}$ is a Hermitian metric on $X_{j+1}$. 
We choose $c_{j+1}$ such that
\begin{equation}\label{hermitian}
g_{j+1}\geq e^{-\frac{1}{(j+1)^2}}\pi_{j+1}^*g_j
\end{equation}
From (\ref{hermitian}) and from $T_{\varepsilon}\geq -\varepsilon g$ it follows that 
\begin{equation*}
T_N\geq -\varepsilon e^{\frac {1}{1^2}+\frac{1}{2^2}+...+\frac{1}{(N-1)^2}} 
g_N\geq -e^{\frac{\pi^2}{6}}\varepsilon g_N
\end{equation*}

On $X^{\varepsilon}=X_N$ we consider the current 
$T_N=\pi_{\varepsilon}^*T_{\varepsilon}$ which is in the 
same class as $\gamma_{\varepsilon}=\pi_{\varepsilon}^*\gamma$.
Each of the divisors $D_i$ are either proper transforms of divisors on 
$X$ or else components of the exceptional divisor of 
$\pi_{\varepsilon}:X^{\varepsilon}\to X$. In the first case, as above, we have 
\begin{equation*}
\int_{D_i}\gamma_{\varepsilon}\wedge\gamma_{\varepsilon}=
\int_Z\eta_{\varepsilon}\wedge\eta_{\varepsilon}+
2\int_Z\partial\alpha_{\varepsilon}\wedge\bar\partial\alpha_{\varepsilon}>0 
\end{equation*}
where $\eta_{\varepsilon}=\pi_{\varepsilon}^*\eta$ and $\alpha_{\varepsilon}=
\pi_{\varepsilon}^*\alpha$, hence $D_i$ is a K\"ahler surface with a K\"ahler form 
$\omega_i$, and from 
\begin{equation*}
\int_C\gamma_{\varepsilon}\geq 0, \int_{D_i}\gamma_{\varepsilon}\wedge \omega_i\geq 0
\end{equation*}
where $C$ is some curve in $D_i$, it follows that $[\gamma_{\varepsilon}]$ 
is nef on $D_i$. In the second case, $D_i$ is the projectivization of a rank $2$ 
vector bundle on a curve, hence it is K\"ahler, and again, from the inequalities
\begin{equation*}
\int_C\gamma_{\varepsilon}=\int_C\eta_{\varepsilon}\geq 0, \int_{D_i}
\gamma_{\varepsilon}\wedge \omega_i=\int_{D_i}\eta_{\varepsilon}\wedge\omega_i\geq 0
\end{equation*}
it follows that $[\gamma_{\varepsilon}]$ is nef on $D_i$.

Therefore, the current $T_N=\gamma_{\varepsilon}+
i\partial\bar\partial\pi_{\varepsilon}^*\varphi_{\varepsilon}$ 
satisfies $T_N\geq -e^{\frac{\pi^2}{6}}\varepsilon g_N$ and its Bott-Chern class 
$[\gamma_{\varepsilon}]$ is nef on each $D_i$. By using the techniques in  
\cite{paun}, we can find a ${\mathcal C}^{\infty}$ function $\psi_{N}$ on $X_{\varepsilon}$ such that
\begin{equation*}
\gamma_{\varepsilon}+i\partial\bar\partial\psi_N\geq -2 e^{\frac{\pi^2}{6}}\varepsilon g_N
\end{equation*}
Namely, from Lemme 1 in \cite{paun}, we first find a smooth function $f_{\varepsilon}$ in a 
neighborhood of $\cup_iD_i$ such that 
$\gamma_{\varepsilon}+i\partial\bar\partial f_{\varepsilon}\geq -e^{\frac{\pi^2}{6}}\varepsilon g_N$ 
and then take for $\psi_N$ a regularization of the maximum between $f_{\varepsilon}-C$ and 
$\pi_{\varepsilon}^*\varphi_{\varepsilon}$, where $C$ is some large constant. Note that 
$\pi^*\varphi_{\varepsilon}$ takes the value $-\infty$ on $\cup_iD_i$.

Now we construct by induction ${\mathcal C}^{\infty}$ functions 
$\psi_{\varepsilon,j}$ on $X_j$ such that 
\begin{equation}\label{inequality}
\gamma_j+i\partial\bar\partial\psi_{\varepsilon,j}\geq 
-2e^{\frac{\pi^2}{6}}\left (\frac {1}{2^0}+\frac {1}{2^1}+...+\frac{1}{2^{N-j}}\right ) \varepsilon g_j
\end{equation}
where $\gamma_j$ is the pull-back of $\gamma$ to $X_j$.

Set $\psi_{\varepsilon,N}=\psi_{N}$ and suppose that $\psi_{\varepsilon,j+1}$ 
has been constructed on $X_{j+1}$ such that

\begin{equation}\label{localfunction}
\gamma_{j+1}+i\partial\bar\partial \psi_{\varepsilon,j+1}\geq 
- 2e^{\frac{\pi^2}{6}}\left (\frac {1}{2^0}+\frac {1}{2^1}+...+\frac{1}{2^{N-j-1}}\right ) \varepsilon g_{j+1}
\end{equation}

 If $C_j$ is the center of the blow-up $\pi_{j+1}:X_{j+1}\to X_j$, then 
 $\gamma_j\vert C_j$ is nef since $C_j$ is a curve or a point and if $C_j$ is a curve, 
 $\int_{C_j}\gamma_j\geq 0$, so from Lemme 1 in \cite{paun}, there exists $U_j$ 
 a neighborhood of $C_j$ and $\lambda_j$ a ${\mathcal C}^{\infty}$ function on $U_j$ such that
 
 \begin{equation*}
 \gamma_j+i\partial\bar\partial\lambda_j\geq - e^{\frac{\pi^2}{6}}
 \left (1+\frac 12+...+\frac {1}{2^{N-j-1}}+\frac{1}{2^{N-j+1}}\right )\varepsilon g_{j}
 \end{equation*}

Pushing forward (\ref{localfunction}) to $X_j$ we obtain

\begin{equation*}
\gamma_j+i\partial\bar\partial\pi_{j+1,*}\psi_{\varepsilon ,j+1}\geq  
- 2e^{\frac{\pi^2}{6}}\left (1+\frac {1}{2}+...+\frac{1}{2^{N-j-1}}\right ) 
\varepsilon (g_j+c_{j+1}\pi_{j+1,*}\beta_{j+1} )
\end{equation*}
It is well-known that $\pi_{j+1,*}\beta_{j+1}$ is $i\partial\bar\partial$ exact 
(since $\beta_{j+1}$ is in the same class as $-[E_{j+1}]$
and the push forward of $[E_{j+1}]$ is $0$), so let 
$\pi_{j+1,*}\beta_{j+1}=i\partial\bar\partial\mu_j$. Therefore
\begin{equation*}
\gamma_j+i\partial\bar\partial \left [\pi_{j+1,*}\psi_{\varepsilon,j+1}+
2e^{\frac{\pi^2}{6}}\left ( 1+\frac 12+...+\frac {1}{2^{N-j-1}}\right )\varepsilon c_{j+1}\mu_j \right ]\geq
\end{equation*}
\begin{equation*}
-2e^{\frac{\pi^2}{6}}\left ( 1+\frac 12+...+\frac {1}{2^{N-j-1}}\right )\varepsilon g_j
\end{equation*}
and, as in \cite{paun}, using the function $\lambda_j$ constructed above, 
and a function $\zeta$ as in Lemme 2 in \cite{paun}, we obtain a ${\mathcal C}^{\infty}$ 
function $\psi_{\varepsilon,j}$ on $X_j$ which satisfies (\ref{inequality}).

For $j=0$ we obtain a ${\mathcal C}^{\infty}$ function $\psi_{\varepsilon,0}$ such that
\begin{equation*}
\gamma+i\partial\bar\partial\psi_{\varepsilon,0}\geq - 4e^{\frac{\pi^2}{6}}\varepsilon g
\end{equation*}
This means that $[\gamma]$ is nef.
\medskip

{\it Step 3.} From Lemma 2.1 in \cite{chiose1} with $k=3$ we obtain 
\begin{equation*}
\int_X\gamma^3=\int_X\eta^3+6\int_X\eta\wedge\partial\alpha\wedge\bar\partial\bar\alpha>0
\end{equation*}
and we can use Theorem 4.1 in \cite{chiose2}. Indeed, $[\gamma]$ is a nef class, 
of positive self-intersection, and $X$ is a $3$-fold that supports a $SKT$ metric 
(see Remark 4.3 in \cite{chiose2}). This implies that $X$ is K\"ahler.
\end{proof}

\begin{rmk}
Given $Z$ a singular component of $\cup_{c>0}E_c{T}$, it is clear that, if 
$p:\widetilde X\to X$ is a resolution of singularities of $Z$, then $\widetilde Z$ 
(the proper transform of $Z$) is K\"ahler and that $p^*\gamma$ is nef on 
$\widetilde Z$. This implies that, for every $\varepsilon>0$, there exists 
$\widetilde\varphi_{\varepsilon}$ a ${\mathcal C}^{\infty}$ function on 
$\widetilde Z$ such that $p^*\gamma+i\partial\bar\partial\widetilde\varphi_{\varepsilon}\geq 
- \varepsilon \widetilde g$  on $\widetilde Z$. However, it is not obvious to us that from 
this data one can construct a ${\mathcal C}^{\infty}$ function $\varphi_{\varepsilon}$ 
on $Z$ which satisfies $\gamma+i\partial\bar\partial\varphi_{\varepsilon}\geq -\varepsilon g$ 
(so that it is as in D\'efinition 3 in \cite{paun}) since such a function has to be locally
 the restriction of a ${\mathcal C}^{\infty}$ function on a neighborhood of $Z$. This is the 
 reason for having to consider the approximation current $T_{\varepsilon}$ in the proof 
 of Theorem \ref{B+G=K} above instead of working directly with the closed positive current $T$.
\end{rmk}

\begin{rmk}
In \cite{verbitsky} Verbitsky showed that a twistor space which supports a $SKT$ 
metric is K\"ahler. Our result does not imply Verbitsky's result. On a twistor space, 
it is not  always true that the balanced cone equals the Gauduchon cone, as noticed 
in the proof of Theorem \ref{main-bad-twistors}.
\end{rmk}

\subsection*{Acknowledgements}  The first author was supported by 
 a CNCS - UEFISCDI grant, project no.
PN-III-P4-ID-PCE-2016-0341.
The second author acknowledges the support of the Simons Foundation's 
"Collaboration Grant for Mathematicians", while the third author was supported 
by the NSF grant DMS-1309029. 

 \providecommand{\bysame}{\leavevmode\hbox
to3em{\hrulefill}\thinspace}


\begin{thebibliography}{Dyn52b}





\bibitem{alessandrini1}
{L. Alessandrini, G. Bassanelli}, 
{\em The class of compact balanced manifolds is invariant under modifications.} in  
Complex analysis and geometry (Trento, 1993), 1–--117.




\bibitem{alessandrini2} 
{L. Alessandrini, G. Bassanelli}, 
{\em Metric properties of manifolds bimeromorphic to compact K\"ahler spaces}, 
J. Differential Geom. {\bf 37} (1993), no. 1, 95–-121.




\bibitem{bi}
{B. Alexandrov, S. Ivanov,} 
{\em Vanishing theorems on Hermitian manifolds}. 
Differential Geom. Appl. {\bf 14} (2001), no. 3, 251--265.





\bibitem{ahs}
{M.F.  Atiyah, N.J. Hitchin, I.M. Singer}, 
{\em Self-duality in four-dimensional Riemannian geometry.} 
Proc. Roy. Soc. London Ser. A {\bf 362} (1978), no. 1711, 425--461.






\bibitem{bdpp}
{S. Boucksom, J.-P. Demailly, M. P\u aun, T. Peternell}, 
{\em The pseudoeffective cone of a compact K\"ahler manifold 
and varieties of negative Kodaira dimension.} 
J. Algebraic Geom. {\bf 22} (2013), 201--248.





\bibitem{chiose1}
{I. Chiose}, 
{\em Obstructions to the existence of K\"ahler structures on compact complex manifolds}.
Proc. Amer. Math. Soc. {\bf 142} (2014), no. 10, 3561–--3568.





\bibitem{chiose2}
{I. Chiose},
{\em The K\"ahler Rank of Compact Complex Manifolds}. 
J. Geom. Anal. {\bf 26} (2016), no. 1, 603--–615.





\bibitem{crs}
{I. Chiose, R. R\u asdeaconu, I. \c Suvaina}, 
{\em Balanced metrics on uniruled manifolds}. 
arXiv:1408.4769 [math.DG].





\bibitem{de-reg} 
{J.-P. Demailly,} 
{\em Regularization of closed positive currents and intersection theory}. 
J. Algebraic Geom. {\bf 1} (1992), no. 3, 361--409.





\bibitem{demaillypaun} 
{J.-P. Demailly, M. P\u aun}, 
{\em Numerical characterization of the K\"ahler cone of a compact K\"ahler manifold}, 
Ann. of Math. (2) {\bf 159} (2004), no. 3, 1247–-1274.





\bibitem{singer}
{M. Eastwood, M. Singer},
{\em The Fr\"olicher spectral sequence on a twistor space}.
J. Diff. Geom. {\bf 38} (1993), 653--669.





\bibitem{fino1}
{A. Fino, L. Vezzoni}, 
{\em Special Hermitian metrics on compact solvmanifolds}. 
J. Geom. Phys. {\bf 91} (2015), 40--–53.





\bibitem{fino}
{A. Fino, L. Vezzoni}, 
{\em On the existence of balanced and $SKT$ metrics on nilmanifolds}. 
Proc. Amer. Math. Soc. {\bf 144} (2016), no. 6, 2455–--2459.





\bibitem{fly1} 
{J.-X. Fu, J. Li, S.-T. Yau}, 
{\em The theory of superstring with flux on non-K\"ahler manifolds and the complex Monge-Amp\`ere equation.} 
 J. Diff. Geom. {\bf 78} (2008),  no. 3, 369--428.




\bibitem{fly2} 
{J.-X. Fu, J. Li, S.-T. Yau}, 
{\em Balanced metrics on non-K\"ahler Calabi-Yau threefolds.}
J. Diff. Geom. {\bf 90} (2012), 81--130.




\bibitem{fuxiao}
{J.-X. Fu, J. Xiao}, 
{\em Relations between the K\"ahler cone and the balanced cone of a K\"ahler manifold}.
Adv. Math. {\bf 263} (2014), 230-–-252.




\bibitem{gauduchon}
{P. Gauduchon}, 
{\em Le th\'eor\`eme de l'excentricit\'e nulle.}
C.R Acad. Sci. Paris S\'er. A-B 2{\bf 285} (1977), no. 5, A387--A390.





\bibitem{lamari}
{A. Lamari}, 
{\em Le c\^one k\"ahl\'erien d'une surface}, 
J. Math. Pures Appl., {\bf 78} (1999), 249--263.





\bibitem{michelsohn}
{M. L. Michelsohn}, 
{\em On the existence of special metrics in complex geometry.} 
Acta Math., {\bf 149},  (1982), no. 1, 261--295.





\bibitem{muskarov}
{O. Mu\c skarov}, 
{\em Almost Hermitian structures on twistor spaces and their types.} 
Atti del Seminario Matematico e Fisico dell'Universit\'a di Modena {\bf 37}, (1989), no. 2,  285--297.





\bibitem{paun}
{M. P\u aun}, 
{\em Sur l'effectivit\'e num\'erique des images inverses de fibr\'es en droites}. 
Math. Ann. {\bf 310} (1998), no. 3, 411–--421.



\bibitem{popovici}
{D. Popovici}, 
{\em Volume and Self-Intersection of Differences of Two Nef Classes}. 
arXiv:1505.03457[math.CV].




\bibitem{taubes}
{C. H. Taubes}, 
{\em The existence of anti-self-dual conformal structures.}
 J. Differential Geom. {\bf 36} (1992), no. 1, 163--253.




\bibitem{toma}
{M. Toma},
{\em A note on the cone of mobile curves.}
C. R. Math. Acad. Sci. Paris {\bf 348} (2010), 71--73.




\bibitem{verbitsky}
{M. Verbitsky}, 
{\em Rational curves and special metrics on twistor spaces}. 
Geom. Topol. {\bf 18} (2014), no. 2, 897–--909.



\bibitem{wittnystrom}
{D. Witt Nystr\"om}, 
{\em Duality between the pseudoeffective and the movable cone on a projective manifold (with an appendix by S. Boucksom)}. 
arXiv:1602.03778[math.CV].


\bibitem{xiao}
{J. Xiao},
{\em Characterizing volume via cone duality}. 	arXiv:1502.06450 [math.AG].


\end{thebibliography}
\end{document}